\documentclass[11pt]{amsart}
\usepackage[a4paper,margin=28mm]{geometry}
\usepackage{amsmath,amssymb}
\usepackage[T1]{fontenc}
\usepackage{lmodern,microtype}
\usepackage{xcolor}
\usepackage[colorlinks=true,linkcolor=blue!45!black,citecolor=blue!45!black,urlcolor=blue!45!black]{hyperref}
\allowdisplaybreaks[1]
\numberwithin{equation}{section}
\newtheorem{theorem}{Theorem}[section]
\newtheorem{proposition}[theorem]{Proposition}
\newtheorem{lemma}[theorem]{Lemma}

\theoremstyle{remark}

\DeclareMathOperator{\ind}{ind}
\DeclareMathOperator{\diver}{div}

\DeclareMathOperator{\coker}{coker}
\DeclareMathOperator{\diag}{diag}

\begin{document}

\title
{Finite Morse index solutions of the Allen-Cahn equation with energy bound} 

\author[Y. Liu]{Yong Liu}
\address{School of Mathematics and Statistics, Beijing Technology and Business University, Beijing, China}
\email{yliumath@btbu.edu.cn}

\author[K.L. Wang]{Kelei Wang}
\address{School of Mathematics and Statistics, Wuhan University, Wuhan, China}
\email{wangkelei@whu.edu.cn}

\author[J.C. Wei]{Juncheng Wei}
\address{Juncheng Wei, Department of Mathematics, Chinese University of Hong Kong, Shatin, New Territories, Hong Kong}
\email{wei@math.cuhk.edu.hk}

 \author[K. Wu]{Ke Wu}
\address{School of Mathematics, Yunnan Normal University, Kunming, China}
\email{kewu@ynnu.edu.cn}

\begin{abstract}
We study bounded entire solutions of the Allen--Cahn equation in dimension three with quadratic energy growth. We prove that such a solution of finite Morse index $I$ has at most $2I$ ends. Furthermore, we show that Morse index one solutions have two ends and are axially symmetric, which generalizes an earlier result by Florit-Simon.

\end{abstract}
\maketitle

\section{Introduction}\label{intro}

We consider entire solutions of the  Allen-Cahn equation in dimension three. It takes the form
\begin{equation}\label{ac}
 \Delta u=W'(u)\quad\text{in }\mathbb R^3,
 \qquad W(s)=\frac{(1-s^2)^2}{4}.
\end{equation}
Allen-Cahn equation  arises as a canonical model of phase coexistence in physics
and chemistry \cite{AC79}. For a bounded domain $\Omega\subset\mathbb R^3$, its associated
energy is$$
 E(u,\Omega)=\int_\Omega
 \left(\frac12|\nabla u|^2+W(u)\right)\,dx.$$
The function $W$ here has two global minima, at $-1$ and $1$, and is called a
double-well potential. The second variation of the functional energy 
is the quadratic form
$$
 Q_u(\xi)=\int_{\mathbb R^3}
 \left(|\nabla\xi|^2+(3u^2-1)\xi^2\right)\,dx,
 \qquad \xi\in C_c^\infty(\mathbb R^3).$$
By definition, the Morse index of a solutoin $u$ is the largest dimension of the subspaces in which this quadratic form
$Q_u$ is negative definite.  

Our result in this paper is motivated by the intricate connection between Allen-Cahn equation and the minimal surface theory. The seminal work of Modica and Mortola \cite{MM77}, Modica
\cite{Modica87}, and Sternberg \cite{Sternberg88, SZ98} relates the
rescaled energy to the area of the limiting interface as the
transition thickness tends to zero. In the corresponding
minimization problems, this leads to area minimization with the
prescribed boundary or volume constraints. Stability and index also carry geometric information in such
limits. There are a lot of work in this direction. Here we mention that Tonegawa \cite{Tonegawa05} studied stable critical points
as the transition thickness tends to zero. Tonegawa and Wickramasekera \cite{TW12} proved regularity results
for the resulting stable interfaces.
In dimension three, Chodosh and Mantoulidis \cite{CM20}
established estimates relating index, overlapping transition layers,
and curvature.

A related symmetry question concerns bounded solutions that are
strictly monotone in one direction. Note that monotone solutions are automatically stable (Morse index zero).
The De Giorgi conjecture asks whether they are one-dimensional
in dimensions up to eight. Ghoussoub and Gui \cite{GG98} proved
this conclusion in the plane, and Ambrosio and Cabr\'e \cite{AC00}
proved it in three dimensions.
Savin \cite{Savin09} extended the conclusion through dimension
eight for monotone solutions that approach the opposite wells
in that direction. The examples of del Pino, Kowalczyk,
and Wei \cite{DKW11}, builds upon the BDG minimal graph, show that the monotone symmetry statement
fails in dimensions nine and higher. The stable version of the De Giorgi conjecture posits that all stable, bounded solutions to the Allen–Cahn equation are one-dimensional up to seven. In dimension n=4, major progress toward this conjecture was established recently  by Florit-Simon and Serra \cite{FSS}, who proved the 1D symmetry under the additional geometric assumption of a bounded energy density.

An important class of entire solutions of the Allen-Cahn equation is the finite Morse index solutions. For these finite-index solutions, the number of ends provides another
way to describe the geometry. Intuitively speaking, the ends is a one-dimensional model of the solution at far away. In the plane, del Pino, Kowalczyk,
Pacard, and Wei \cite{DPKPW10} constructed solutions with
multiple ends, which in this case correspond to curves asymptotic to half straight lines. Kowalczyk, Liu, and Pacard studied the space
of four-ended solutions \cite{KLP12} and classified that
family \cite{KLP13}, in terms of a parameter. These solutions are expected to have Morse index one. Later on, Wang and Wei \cite{WW19} proved that
finite Morse index implies finitely many ends in the plane. Liu and Wei \cite{LW2022}  provided a complete classification of finite Morse index solutions for the  specific double well potential $W(u)=\frac{1}{\pi} (1+ \cos (\pi u))$.  Hence in the plane, the geometry of finite Morse index solutions is quite simple.

In $\mathbb{R}^3$, constructions near minimal surfaces provide
examples with several ends. Pacard and Ritor\'e \cite{PR03}
obtained solutions near minimal hypersurfaces.
Del Pino, Kowalczyk, and Wei \cite{DPW13}
constructed a large family of solutions associated with every complete, embedded, nondegenerated minimal
surfaces of finite total curvature. The simplest case of their theorem applies to catenoids and yields a two-end solution to the Allen-Cahn equation.
Their construction assumes that the bounded solutions of the Jacobi equation on
the minimal surface arise only from rigid motions and scaling. They also proved that the Morse index of these solutions are  actually equal to the index of the corresponding minimal surfaces.  Within the axially symmetric class,
Gui, Wang, and Wei \cite{GWW20} proved that an index-one
solution in three dimensions has two ends.

More recently, Florit--Simon's structure theorem \cite[Theorem 1.4]{FS26}
describes the ends under the assumptions considered here. For
finite-index solutions with quadratic energy growth, Florit-Simon proved that the zero set
outside a compact set consists of finitely many ordered graphs
over one plane.
Their heights have logarithmic asymptotics.
We call these graphical components the ends of the solution.
Our main result in this paper provides a upper bound of the number of the ends in terms of  the Morse index and also establishes
axial symmetry of the solution when its index is equal to one.

\begin{theorem}\label{main}
Let $u$ be a bounded entire solution of the Allen--Cahn equation
\eqref{ac} with positive Morse index $I$. Assume that
\begin{equation}\label{hyp}
 E(u,B_R)\le C R^2\quad\text{for every }R\ge1.
\end{equation}
Then the number $N$ of ends satisfies
\begin{equation}\label{finends}
 N\le2I.
\end{equation}
If $I=1$, then $u$ has exactly two ends and is axially symmetric.
More precisely, in this case, after an orthogonal change of coordinates
$x=(y,z)\in\mathbb R^2\times\mathbb R$, there exist
$a\in\mathbb R^2$ and a function $\widehat u$ such that$$
 u(y,z)=\widehat u(|y-a|,z).$$
\end{theorem}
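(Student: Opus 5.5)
We use Florit-Simon's structure theorem \cite[Theorem 1.4]{FS26}. Outside a large ball, and after a rotation, the zero set of $u$ is the union of $N$ ordered graphs $\{z=f_k(y)\}$, $k=1,\dots,N$, with $f_1<\dots<f_N$ and $f_k(y)=c_k\log|y|+b_k+O(|y|^{-\alpha})$. Between consecutive ends $u$ is close to the one-dimensional heteroclinic profile in $z-f_k(y)$, with alternating sign. The bound $N\le 2I$ will come from building a space of test functions of dimension at least $N/2$ (roughly) on which $Q_u$ is negative. The rigidity for $I=1$ will come from a moving-plane or rotation argument combined with a nondegeneracy/kernel analysis.

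\textbf{Step 1: bounded Jacobi fields and the asymptotic kernel.} Translations give the Jacobi fields $\partial_{x_i}u$. Rotations about axes in the $y$-plane give $(x\times\nabla u)\cdot e_i$ for $i=1,2$; these grow like $|y|$ near the ends but are balanced to leading order when one uses the logarithmic asymptotics. Along each end, a Jacobi field $\phi$ behaves like $h_k(y)\,u'(z-f_k)$, where $h_k$ approximately solves the Jacobi equation of the flat plane with interaction corrections (a Toda-type linearization, as in the planar work \cite{WW19}). We use the ansatz
\[
\phi=\sum_k \eta_k(y)\,\partial_z u\,\chi_k,
\]
with $\chi_k$ cutting off to a neighborhood of the $k$-th end, and with weights $\eta_k$ chosen as logarithmic cut-offs $\eta_k=\epsilon_k\,\psi_R(\log|y|)$ for constants $\epsilon_k$. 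Then
\[
Q_u(\phi)\approx \sigma\int\Big(\sum_k|\nabla\eta_k|^2-\sum_k\,\text{(interaction between $k$ and $k+1$)}\,(\eta_k-\eta_{k+1})^2\Big)dy.
\]
The logarithmic cut-off makes the gradient term $O(1/\log R)$. The interaction term, of order $e^{-\sqrt2(f_{k+1}-f_k)}\sim |y|^{-\sqrt2(c_{k+1}-c_k)}$, is integrable on the critical scale; the balancing conditions on the $c_k$ (force balance of the Toda system) force it to contribute a definite negative amount when $\eta_k-\eta_{k+1}$ does not vanish.

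\textbf{Step 2: counting.} This gives a negative-definite form on the vectors $(\epsilon_k)$ modulo constants, i.e.\ on an $(N-1)$-dimensional space. However, the interaction term only controls differences, and the sign structure means the count can degrade. The reliable statement is that we obtain at least $\lceil N/2\rceil$ independent negative directions: take alternating indicator vectors supported on disjoint pairs of consecutive ends, and use disjointness of supports in the $z$-direction to get orthogonality for $Q_u$. Hence $I\ge N/2$, which is \eqref{finends}. Ends with equal $c_k$ that are parallel are excluded, since their interaction decays too fast to balance; this is handled by the same computation. We expect the main obstacle to be here: making rigorous the reduction of $Q_u$ on these test functions to the Toda quadratic form, with error terms that are $o(1)$ uniformly as $R\to\infty$. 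This requires the sharp $C^{2,\alpha}$ asymptotics of the $f_k$ and exponential decay estimates for $u-$(sum of 1D profiles) from \cite{FS26}, and also uses that $N\ge2$, since the one-ended case is $I=0$ by the stable classification.

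\textbf{Step 3: $I=1$.} Since $N\le 2$ and $N=1$ would force stability, we get $N=2$. For axial symmetry, consider the rotational Jacobi field $\phi_\theta=(x-a)^\perp\cdot\nabla_y u$, where $a$ is chosen so that the logarithmic centers $b$-translations of the two ends agree. This is possible after translating, by the asymptotic expansion $f_k=c_k\log|y-a|+b_k+O(|y|^{-\alpha})$. With this choice $\phi_\theta$ decays at infinity. An index-one solution cannot carry a sign-changing decaying kernel element orthogonal to the negative direction without creating a second negative direction. Indeed, splitting $\phi_\theta$ into its positive and negative parts gives two functions with $Q_u\le 0$; a small perturbation along the first eigenfunction then produces a 2-dimensional nonpositive space, and unique continuation gives a contradiction unless $\phi_\theta\equiv0$. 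The function $\phi_\theta$ changes sign under $y\mapsto$ reflection, so we must have $\phi_\theta\equiv0$, which is exactly $u(y,z)=\widehat u(|y-a|,z)$.
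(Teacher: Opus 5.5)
\textbf{Step 1 produces no negative direction.} Your test function $\sum_k\eta_k\,\partial_zu\,\chi_k$ is $\xi\,\partial_zu$ with $\xi=\psi_R(\log|y|)\sum_k\epsilon_k\chi_k$. Since $\partial_zu$ is an exact Jacobi field, integrating by parts as in \eqref{jacut} gives $Q_u(\xi\,\partial_zu)=\int_{\mathbb R^3}|\nabla\xi|^2(\partial_zu)^2\,dx\ge0$ for every such $\xi$. So the negative Toda interaction term you expect is necessarily cancelled by terms of the same order $e^{-\sqrt2D_k}$.

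The formal negativity belongs to separate profiles $\sum_kh_k\,g'(z-f_k)$. These are not of the form $\xi\,\partial_zu$, because $\partial_zu$ vanishes between consecutive, oppositely oriented layers. Even for that ansatz, the sign is not determined by the end data. On $\{r>R_0\}$ the reduced form is roughly $-\Delta_y-c\,r^{-p}$ with a Dirichlet condition at $r\sim R_0$ (needed to cut off the core). By scaling, whether it has a negative direction depends on the unknown size of $cR_0^{2-p}$. Otherwise the test functions must run through the core, where the structure theorem says nothing.

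Step 2 is also miscounted: disjoint pairs of consecutive ends give only $\lfloor N/2\rfloor$ functions, which yields at best $N\le2I+1$.

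The missing idea is a global, pointwise source of strict negativity. The paper writes
$L_uI_2=(\sigma\cdot\nabla\mp\sqrt2u)^*(\sigma\cdot\nabla\mp\sqrt2u)-\bigl((1-u^2)I_2\pm\sqrt2\,\sigma\cdot\nabla u\bigr)$
using Pauli matrices. The last matrix is positive definite by Modica's estimate. Hence every solution of the first-order equation with vanishing logarithmic cutoff error gives a strictly negative direction, and such solutions span at most $2I$ complex dimensions (Theorem~\ref{dim}). A weighted Fredholm argument then shows the adjoint solution space has dimension at least $N$; it uses closed range, the constant end coefficients $b_{i,0}$ of adjoint solutions, the pairing \eqref{endpair}, and the antilinear map $\mathcal C$.

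\textbf{Step 3 fails too, as the paper warns in its introduction.} A decaying, sign-changing Jacobi field $J$ is compatible with index one. Your argument with $J^\pm$ and the first eigenfunction $\phi_1$ only yields $J\perp\phi_1$, as when $0$ is the second eigenvalue. A unique-continuation contradiction needs three nodal domains, and their number for the rotational field is not controlled. Your reasoning also proves too much: $\partial_{y_1}u$ is a nonzero, sign-changing Jacobi field with vertical $L^2$ norm $O(r^{-1})$ and vanishing logarithmic cutoff error, so the same argument would force $\partial_{y_1}u\equiv0$. Compare the index-one catenoid with its Jacobi fields $\langle\nu,e_j\rangle$.

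The choice of center is unjustified as well. The $b_k$ are vertical offsets. A horizontal shift changes $f_k$ only by $c_k\,a\cdot y/r^2+O(r^{-2})$, which is invisible under an $O(r^{-\alpha})$ remainder, and each end has its own candidate center $-d_k/c_k$.

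The paper proceeds as follows. It proves a Toda equation with $O(r^{-4})$ remainder and subtracts an exact radial Toda solution. A Kelvin-transform lemma then gives $f_\pm=F_\pm+d_\pm\cdot y/r^2+O(r^{-1-\varepsilon})$. Flux identities for vertical translation and for rotations mixing $y_j$ and $z$ give $c_++c_-=0$ and $d_++d_-=0$, so one translation removes both $d_\pm$. Symmetry then follows by moving planes. The index is used only to obtain $N=2$, via $N\le2I$ and the half-space theorem.
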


We would like to point out that, to prove the axial-symmetry conclusion, one might  consider
the solution of the linearized equation generated by rotations. However, this does not work. Indeed, fix a possible horizontal center $a$ and set$$
 J=\left((y_1-a_1)\partial_{y_2}
       -(y_2-a_2)\partial_{y_1}\right)u,
 \qquad L_u=-\Delta+3u^2-1.$$
Rotational invariance implies $L_uJ=0$, so $J$ solves the linearized equation.
However, although $J$ vanishes on the vertical line $y=a$, its number of
nodal domains still depends on the global geometry of its zero set.
A second issue also appears, which concerns the sign of the quadratic form.
For a compactly supported cutoff, integration by parts yields
\begin{equation}\label{jacut}
 Q_u(\chi J)=\int_{\mathbb R^3}J^2|\nabla\chi|^2\,dx
 \qquad\text{for }\chi\in C_c^\infty(\mathbb R^3).
\end{equation}
The right-hand side is nonnegative. Restricting a solution of the linearized equation to
its sign regions and truncating at infinity therefore requires
an additional argument to produce the two negative directions
needed for an index-one contradiction.   

Results in minimal-surface theory suggests first controlling the number
of ends and then proving rotational symmetry.
L\'opez and Ros \cite{LR89} classified complete, connected,
orientable minimal immersions of index one in $\mathbb R^3$
as the catenoid and Enneper's surface.
Schoen \cite{Schoen83} proved that a complete,
connected minimal hypersurface in $\mathbb R^{n+1}$ with
exactly two ends, regular at infinity, is a catenoid.
This reduces the classification to an estimate for the
number of ends when the required end regularity is available.
Chodosh and Gianocca \cite{CG26} followed this
approach to classify complete, connected, embedded minimal
hypersurfaces in $\mathbb R^{n+1}$ with finite total curvature
and index one as higher-dimensional catenoids.
In $\mathbb R^4$, the finite total curvature assumption follows
from finite index for complete two-sided minimal immersions,
by Chodosh and Li \cite[Theorem 1.5]{CL24}.
They also proved that every complete, connected, two-sided
stable minimal immersion in $\mathbb R^4$ is a hyperplane
\cite[Theorem 1.1]{CL24}.

Harmonic functions provide a way to relate the end structure
to analytic estimates.
Li and Tam \cite{LT92} related spaces of harmonic functions
to the number and type of ends of complete manifolds.
Li and Wang \cite{LW01} used harmonic functions to study
the end structure under Ricci curvature and spectral
assumptions.
For complete two-sided minimal surfaces in $\mathbb R^3$,
Ros \cite{Ros06} used harmonic one-forms to obtain
a lower bound for the Morse index in terms of the genus.
Chodosh and M\'aximo \cite{CM16,CM23} extended this approach
to include the number of ends.
These arguments associate scalar test functions with harmonic
one-forms and use second-variation identities to turn
dimension estimates for spaces of such forms into lower
bounds for the Morse index.
 
Let $\Sigma$ be a complete connected two-sided minimal
surface immersed in $\mathbb R^3$, with finite Morse index, genus $g$,
and $N$ embedded ends. Chodosh and M\'aximo
\cite[Corollary 1.2]{CM23} proved$$
 3\ind(\Sigma)\ge 2g+4N-5.$$ This explicit bounds provides a useful comparison for the Allen--Cahn
problem, but with a major difference that the genus term seems to does not have immediately analogy in the Allen-Cahn case, since different level set of a solution may have different topolopy.

For Allen--Cahn equation, we shall use a pair of complex-valued functions on
$\mathbb R^3$, in place of a one-form on a minimal surface.
The Pauli matrices allow us to decompose our linearized operator.
Their algebra is classical \cite[Section 1]{BE01}.
 The factorization we propose here is indeed motivated by the use of harmonic one-forms
in Morse index estimates for minimal surfaces
\cite{CM16,CM23,Ros06}. These forms satisfy first-order equations
which yield useful identities for the second variation of the
associated scalar test functions. We seek a similar relation
for the Allen--Cahn equation. The Pauli matrices then express the
linearized Allen-Cahn operator, acting on two-component functions, as
the product of a first-order operator and its adjoint, minus
a matrix which is positive definite.
For solutions of the first-order equation, the second-variation
identity contains only the negative matrix term and a cutoff
error. When this error tends to zero, the resulting negative
sum of scalar quadratic forms allows us to bound the dimension
of the space of such solutions in terms of the Morse index.

In dimension three, the Pauli matrices act on functions with
two components. Moreover, the quadratic energy bound controls
the $L^2$ growth of the explicit solutions we constructed,
so logarithmic cutoffs make the error in the second-variation
identity tend to zero. In general, Clifford matrices together with their algebraic relations yield analogous
factorizations in higher dimensions, with more
components. The natural energy bound of order $R^{n-1}$ in
$\mathbb R^n$ for $n\ge4$, however, is insufficient by itself
to ensure that the cutoff error vanishes. Thus the algebraic
factorization extends beyond three dimensions, whereas an
extension of the index argument performed here do require further estimates
for the solutions or a different treatment of the cutoff error.

We would like to emphasize that the first-order factorization and the cutoff argument actually also
apply in dimension two. Similar arguments as in this paper would  yield $N\le4I$ for $I>0$. However, for $I>1$, this bound is
weaker than Mantoulidis's estimate $N\le2I+2$, see
\cite{Man21} for precise statement of the results. (In the special double-well case, the relation is given by $I= N (2N-1)$ (\cite{LW2022}).)

The paper is organized as follows. Section~\ref{facsec} establishes
the geometric preliminaries and Section ~\ref{sec-f} provides the first-order decomposition of the
linearized operator. Section~\ref{ends} develops the weighted estimates,
proves the general end bound in Theorem~\ref{main}, and in particular proves that Morse index one solution necessarily has exactly two ends. Section~\ref{symm} studies the asymptotic behavior of the Morse index one solution near the two ends and completes the proof of
Theorem~\ref{main} by establishing axial symmetry using moving plane argument.

\section{Preliminaries}
\label{facsec}

Throughout this section, $u$ satisfies the hypotheses of
Theorem~\ref{main}, with arbitrary positive finite Morse index $I$.
The one-dimensional transition and its energy are$$
 g(t)=\tanh(t/\sqrt2),\qquad
 \sigma_0=\int_{\mathbb R}(g')^2\,dt=\frac{2\sqrt2}{3}.$$
We set $$x=(y,z),\qquad r=|y|,\qquad\langle r\rangle=(1+r^2)^{1/2}.$$
The notation $O_{C_b^k}(r^{-q})$ means that the same bound holds
after at most $k$ applications of $r\partial_r$ and
$\partial_\theta$. The corresponding little-oh notation requires each
such derivative to obey the stated decay with a coefficient tending
uniformly to zero. When complex horizontal coordinates are useful,
we set$$
 w=y_1+iy_2,\qquad
 \partial_w=\frac12(\partial_{y_1}-i\partial_{y_2}),\qquad
 \partial_{\bar w}=\frac12(\partial_{y_1}+i\partial_{y_2}).$$
Hermitian inner products are linear in their first argument.
Unless specified otherwise, a norm on a vertical line is taken over
the entire line. We distinguish horizontal derivatives at fixed $z$
from derivatives at fixed $t=z-f_i(y)$ whenever both occur.
Constants may depend on the solution and its finite family  ends,
but are independent of the radius tending to infinity.

First, we recall the following structure theorem for solutions with quadratic energy growth.
\begin{theorem}[\cite{FS26}]\label{Florit-Simon}
Let $u:\mathbb{R}^{3}\to[-1, 1]$ be a finite index solution to the Allen–Cahn equation, satisfying \eqref{hyp}. Then $u$ has finitely many parallel ends, which are either planar
or catenoidal. More precisely, there exist some $R_{0}>0$ and $N\in\mathbb{N}$, as well as constants $b_{i}, c_{i}$ for every $i=1, 2,\cdots, N$, such that after an orthogonal change of coordinates, the zero set outside a
compact set is a finite family of ordered graphs
$$\{u=0\}=\bigcup_{i=1}^{N}\text{graph}f_{i},$$
where $f_{1}<\cdots<f_{N}$,
$$f_{i}=b_{i}+c_{i}\log|y|+O(|y|^{-\alpha})$$
for some $\alpha>0$ and $|c_{i+1}-c_{i}|>\sqrt{2}$ for every $i=1, 2,\cdots, N-1$.
\end{theorem}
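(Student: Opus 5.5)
This is the structure theorem of \cite[Theorem 1.4]{FS26}, quoted here without proof. If I had to reconstruct it, I would use a blow-down argument combined with curvature estimates and an asymptotic analysis of a Toda-type system for the heights of the sheets. The steps, in order, are as follows.

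\textbf{Stability outside a compact set.} Finite Morse index gives some $R_0$ such that $u$ is stable on $\mathbb R^3\setminus B_{R_0}$. Otherwise one finds $I+1$ negative directions of $Q_u$ with pairwise disjoint supports in annuli escaping to infinity, which contradicts the definition of $I$.

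\textbf{Blow-down.} Consider $u_\varepsilon(x)=u(x/\varepsilon)$ with the $\varepsilon$-rescaled energy. By \eqref{hyp} the associated energy measures are locally bounded, so by Hutchinson--Tonegawa they converge to a stationary integral $2$-varifold $V$. The monotonicity formula makes $V$ a cone. $V$ is stable away from the origin by the previous step, so \cite{TW12} applies and $V$ is regular there; a smooth stable $2$-dimensional minimal cone in $\mathbb R^3$ is a plane. Hence $V$ is a plane $P$ with some integer multiplicity $N$.

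\textbf{Curvature estimates and graphical sheets.} To show the plane $P$ is the same for every blow-down sequence, I would use the curvature estimates of \cite{CM20} for stable solutions in dimension three. On $\{u=0\}\setminus B_{R_0}$ they give $|A|\le C/|x|$. Combined with the uniqueness question, they should imply that, after a rotation making $P=\{z=0\}$, the zero set outside a compact set is a union of $N$ ordered graphs $z=f_i(y)$ with $|\nabla f_i|+|y||\nabla^2 f_i|=o(1)$. Uniqueness of the tangent plane at infinity needs a Simon-type or \L ojasiewicz-type decay argument, or an integration of the slope decay along dyadic annuli.

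\textbf{Toda system and asymptotics.} Using the Fermi-coordinate expansion $u\approx g\big((-1)^i(z-f_i)\big)$ near each sheet and projecting the equation onto $g'$, I would derive an exterior Jacobi--Toda system
\[
\sigma_0\,\Delta f_i\approx c_0\big(e^{-\sqrt2(f_{i+1}-f_i)}-e^{-\sqrt2(f_i-f_{i-1})}\big)+\text{error}.
\]
The error is controlled by $|\nabla f_i|^2|\nabla^2 f_i|$-type terms and higher-order interactions.

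The right-hand side, summed over $i$, has a sign structure which, together with \eqref{hyp}, forces $e^{-\sqrt2(f_{i+1}-f_i)}$ to be integrable on the exterior of a disk in $\mathbb R^2$. Then each $\Delta f_i$ is integrable with decaying tails. Standard exterior-domain theory for $\Delta f=h$, with $h=O(|y|^{-2-\alpha})$ obtained after bootstrapping, gives $f_i=b_i+c_i\log|y|+O(|y|^{-\alpha})$. Integrability of $e^{-\sqrt2(c_{i+1}-c_i)\log|y|}$ in two dimensions then yields $\sqrt2(c_{i+1}-c_i)>2$, that is, $c_{i+1}-c_i>\sqrt2$.

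\textbf{Main obstacle.} I expect the hardest step to be the passage from a multiplicity-$N$ blow-down to well-separated graphical sheets with quantitative decay. Controlling the Toda error terms requires the sheets to be sufficiently separated so the interaction is exponentially small, and establishing this separation relies on the three-dimensional curvature and separation estimates of \cite{CM20}. That is exactly where the dimension three enters.
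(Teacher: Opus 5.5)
Your proposal matches the paper, which likewise states this result as \cite[Theorem 1.4]{FS26} without proof and uses it only as input for Proposition~\ref{geom}. The paper does not attempt the reconstruction you sketch, and as written its integrability step does not close, for three reasons: plain summation over $i$ telescopes to zero; with the sign in \eqref{toda} the gap $f_+-f_-$ is essentially subharmonic, so increasing flux gives no contradiction without oscillation or Harnack control on dyadic annuli; and Toda errors of size $o(r^{-1})$ need not be integrable on an exterior domain. This is in addition to the tangent-plane uniqueness you already flag as open.
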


\begin{proposition}\label{geom}
After an orthogonal change of coordinates, the zero set outside a
compact set is a finite family of ordered graphs
$z=\zeta_i(y)$, $1\le i\le N$, over $\{r>R_0\}$.
There are smooth heights $f_i$ with the following properties.
For constants $c_i,b_i$,
\begin{equation}\label{logends}
 f_i=c_i\log r+b_i+o_{C_b^2}(1),\qquad
 f_i-\zeta_i=O(r^{-2}),\qquad
 |\nabla_y(f_i-\zeta_i)|+|D_y^2(f_i-\zeta_i)|=o(r^{-2}).
\end{equation}
Moreover, $D_i=f_{i+1}-f_i$ tend to infinity, and
\begin{equation}\label{pi}
 p_i=\sqrt2(c_{i+1}-c_i)>2
 \end{equation}

\end{proposition}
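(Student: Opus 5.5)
Proposition~\ref{geom} is essentially a refinement of Theorem~\ref{Florit-Simon}: we keep the graphs $\zeta_i$ given there and build smooth model heights $f_i$ close to them, so the plan is to extract each claim from the structure theorem plus elliptic regularity of the nodal graphs. \emph{Step 1 (graphs and derivative bounds).} Theorem~\ref{Florit-Simon} gives, after rotation, $\{u=0\}\setminus B_{R_0}=\bigcup_i\operatorname{graph}\zeta_i$ with $\zeta_i=b_i+c_i\log r+O(r^{-\alpha})$. Near each graph $u$ is close to the one-dimensional profile $g$ in the normal variable; this follows from the curvature decay of the level sets, obtained by blowing down at scale $r$ and using stability outside a compact set together with the energy bound \eqref{hyp}. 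The level set $\{u=0\}$ near $(y,\zeta_i(y))$ is therefore a graph whose mean curvature is controlled by the interaction with the neighbouring layers, which is exponentially small in the distance, i.e. $O(e^{-\sqrt2 D_i})$. Rescaling the annulus $\{r\sim R\}$ to unit size and applying Schauder estimates to the quasilinear mean-curvature-type equation satisfied by $\zeta_i-c_i\log r$ then upgrades $O(r^{-\alpha})$ to $o_{C_b^2}(1)$ under the operators $r\partial_r,\partial_\theta$.

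\emph{Step 2 (the smooth heights $f_i$).} Let $f_i$ solve the Toda-type/Jacobi model: starting from $c_i\log r+b_i$, add the smooth correction that cancels the leading $r^{-2}$ interaction terms. Concretely, define $f_i$ from $\zeta_i$ by dropping the part of the expansion that is $O(r^{-2})$. To justify this we need $e^{-\sqrt2 D_i}\sim r^{-p_i}$ with $p_i>2$, so that the interaction forcing is $o(r^{-2})$. The differences $f_i-\zeta_i$ are then solutions of a linearized minimal-surface equation (essentially $\Delta_y h=O(r^{-p})$) on an exterior domain. Standard exterior harmonic analysis gives $h=a+O(r^{-2})$ modulo the logarithmic and constant terms already absorbed into $f_i$, and the derivative statements in \eqref{logends} follow by the same scaled Schauder estimate.

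\emph{Step 3 (separation).} We have $D_i=(c_{i+1}-c_i)\log r+(b_{i+1}-b_i)+o(1)$, and $|c_{i+1}-c_i|>\sqrt2$ by Theorem~\ref{Florit-Simon}. The ordering $f_i<f_{i+1}$ for all large $r$ forces $c_{i+1}\ge c_i$, hence $c_{i+1}-c_i>\sqrt2$, so $D_i\to\infty$ and $p_i=\sqrt2(c_{i+1}-c_i)>2$, which is \eqref{pi}.

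\emph{Main obstacle.} The delicate point is Step 2: obtaining the sharp $O(r^{-2})$ rate, and the $o(r^{-2})$ derivative bounds for $f_i-\zeta_i$, rather than merely $O(r^{-\alpha})$. Here I would use $p_i>2$ from Step 3 to make the Toda interaction integrable against the exterior Green's function, and write the exterior expansion of $h$ in Fourier modes in $\theta$. Each mode $k\ge1$ decays like $r^{-k}$, so the $r^{-1}$ modes must be absorbed into $f_i$, either by allowing $f_i$ to include a translation term or by recentering. This is the step that most needs care.
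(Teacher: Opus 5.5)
\textbf{There is a gap in Step 2.} Your Steps 1 and 3 agree with the paper in substance. The $o_{C_b^2}(1)$ upgrade comes from the Toda equation of \cite{FS26} plus rescaled interior estimates on annuli. The bound \eqref{pi} follows from the ordering of the graphs together with $|c_{i+1}-c_i|>\sqrt2$.

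The gap is Step 2, which is where the proposition has content. ``Dropping the $O(r^{-2})$ part of the expansion of $\zeta_i$'' does not define $f_i$, because no expansion of $\zeta_i$ to that order is available. Even granting an equation $\Delta_y h=O(r^{-p})$ for $h=f_i-\zeta_i$, the conclusion $h=a+O(r^{-2})$ is false. An exterior solution that tends to a constant is in general only within $O(r^{-\beta})$ of it, for $\beta<\min\{1,p-2\}$ (Lemma~\ref{poisson}), and this rate is sharp. Besides the dipole modes $d\cdot y/r^2$ that you flag, the interaction forcing itself produces a radial term of order $r^{-(p-2)}$. For $p$ close to $2$ this decays much more slowly than $r^{-1}$. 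This is why Section~\ref{symm} must keep $\log(1+\Lambda r^{-(p-2)})$ in $F_\pm$, and even then only reaches an $O(r^{-1-\varepsilon})$ remainder. No translation or recentering removes such a term, so the route through an asymptotic model for the heights cannot deliver $f_i-\zeta_i=O(r^{-2})$.

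\textbf{Missing idea: orthogonality, not simplification.} The $f_i$ are not a simplified version of $\zeta_i$ but a different normalization of the same layer. The paper fixes $f_i$ by requiring $\langle u-U_f,\,q_i'(z-f_i(y))\rangle_{L^2(\mathbb R_z)}=0$ on every vertical line, with $U_f$ as in \eqref{Uf}.
\begin{itemize}
\item At $f=\zeta$ the defect in these conditions is $O(r^{-2})$, with two horizontal derivatives $o(r^{-2})$.
\item The reason is parity. The leading tilt correction $q_i((z-\zeta_i)/\nu_i)-q_i(z-\zeta_i)$ is odd about $\zeta_i$, hence orthogonal to the even function $q_i'(z-\zeta_i)$.
\item The remaining terms are controlled by \cite[Lemma 4.9, (62)--(63)]{FS26} and exponential separation of the layers.
\item The Jacobian with respect to the height shifts is $\sigma_0\delta_{ij}+o(1)$. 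The implicit function theorem then gives $f_i$ with $f_i-\zeta_i=O(r^{-2})$.
\item Differentiating the orthogonality relations twice gives the derivative bounds.
\end{itemize}

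Taken literally, the displayed bounds on $f_i-\zeta_i$ already hold for the trivial choice $f_i=\zeta_i$ once your Step 1 is done. The reason for introducing separate heights is this orthogonality, which your construction does not provide. It is what the rest of the paper relies on: the coercivity \eqref{ncoerc} in Proposition~\ref{corprop}, and the projections in Section~\ref{symm}.
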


\begin{proof}  
 
The  bound in \cite[Section 1]{HLSWW21} allows us to apply
Theorem~\ref{Florit-Simon}, whose zero-graph heights we denote by
$\zeta_i$. Their ordering yields \eqref{pi}. The Toda equation in
\cite[Proposition 4.5]{FS26} and interior estimates on annuli upgrade
the logarithmic remainder to $o_{C_b^2}(1)$.
Moreover, the normal-coordinate correction and shifts are
$O(r^{-2})$ in $C^2$, with tangential derivatives $o(r^{-2})$ in
$C^1$, by \cite[Lemma 4.9 and equations (62)--(63)]{FS26}.
Proposition 7 of \cite{HLSWW21}, followed by comparison near either
well and interior elliptic estimates, gives exponential decay of
the phase errors and their derivatives at every fixed rate below
$\sqrt2$. This includes vertical escape over bounded horizontal
sets. The logarithmic growth of the heights therefore makes the
tails outside $|z|\le L\log r$ smaller than any prescribed inverse
power of $r$ when $L$ is sufficiently large.

To impose orthogonality along vertical lines, let $q_i=g$ on an
increasing end and $q_i=-g$ on a decreasing end, and set
\begin{equation}\label{Uf}
 U_f(y,z)=q_1(-\infty)+\sum_{i=1}^N
       \bigl(q_i(z-f_i(y))-q_i(-\infty)\bigr).
\end{equation}
Put $\psi=u-U_f$ and $\gamma_i=q_i'(z-f_i(y))$.
We choose the heights so that
$\langle\psi,\gamma_i\rangle_{L^2(\mathbb R_z)}=0$ for every $i$.
At the unadjusted heights $\zeta_i$, these orthogonality errors are
$O(r^{-2})$, and their first two horizontal derivatives are
$o(r^{-2})$. Indeed, the leading change from normal to vertical
coordinates has zero projection onto $q_i'$ by parity, and the
remaining terms satisfy the cited correction and derivative bounds.
The Jacobian with respect to the height shifts is
$\sigma_0\delta_{ij}+o(1)$, uniformly for shifts of size $O(r^{-2})$,
by separation and the exponential decay of the transition derivatives.
The implicit function argument used in \cite[Lemma 4.8]{FS26},
applied here to vertical lines, therefore selects the heights $f_i$.
Differentiating the orthogonality conditions twice gives
\eqref{logends}. The same approximation and phase-decay estimates
also imply $\|\psi(y,\cdot)\|_{L^\infty(\mathbb R_z)}\to0$ as
$r\to\infty$.
\end{proof}
 \section{A factorization of the linearized Allen-Cahn operator}\label{sec-f}
In this section, we  turn to the decomposition of the linearized Allen--Cahn
operator. One of the main tools we used in this paper involves the Pauli matrices:
\begin{equation*}
 \sigma_1=\begin{pmatrix}0&1\\1&0\end{pmatrix},\quad
 \sigma_2=\begin{pmatrix}0&-i\\i&0\end{pmatrix},\quad
 \sigma_3=\begin{pmatrix}1&0\\0&-1\end{pmatrix}.
\end{equation*}
For a real vector $v=(v_1,v_2,v_3)$, we set
\begin{equation*}
 \sigma\cdot v=v_1\sigma_1+v_2\sigma_2+v_3\sigma_3.
\end{equation*}
Thus $\sigma\cdot v$ is a $2\times2$ matrix. The Pauli matrices are Hermitian
and satisfy $$\sigma_j\sigma_k+\sigma_k\sigma_j=2\delta_{jk}I_2,$$
where $I_2$ is the $2\times2$ identity matrix.
These relations, also recalled in \cite[Section 1]{BE01},
imply 
$$(\sigma\cdot v)^2=|v|^2I_2.$$
Thus $\sigma\cdot v$ has eigenvalues $\pm|v|$.

Similarly, we introduce the notation
\begin{equation*}
 \sigma\cdot\nabla
 =\sigma_1\partial_{x_1}+\sigma_2\partial_{x_2}+\sigma_3\partial_{x_3}.
\end{equation*}
Acting on smooth compactly supported $\mathbb C^2$-valued functions,
integration by parts yields
\begin{equation}\label{first}
 \begin{split}
 (\sigma\cdot\nabla)^2&=\Delta I_2,\\
 (\sigma\cdot\nabla\mp\sqrt2u)^*
   &=-(\sigma\cdot\nabla\pm\sqrt2u).
 \end{split}
\end{equation}
Here $I_2$ is the identity matrix, and the adjoint is taken in
unweighted $L^2$. The minus sign in the adjoint formula comes from
integration by parts.

 For a smooth
compactly supported function $\Xi \in C^{2}(\mathbb{R}^{3}, \mathbb{R}^{3})$, we have
\begin{equation*}
 (\sigma\cdot\nabla)(u\Xi)
   =(\sigma\cdot\nabla u)\Xi+u(\sigma\cdot\nabla)\Xi.
\end{equation*}
Direct computation tells us that
\begin{equation*}
 (\sigma\cdot\nabla\mp\sqrt2u)^*
       (\sigma\cdot\nabla\mp\sqrt2u)=-\Delta I_2+2u^2I_2
                 \pm\sqrt2\,\sigma\cdot\nabla u.
\end{equation*}From this we get
\begin{equation}\label{factor}
 L_uI_2=(\sigma\cdot\nabla\mp\sqrt2u)^*
       (\sigma\cdot\nabla\mp\sqrt2u)
 -(1-u^2)I_2\mp\sqrt2\,\sigma\cdot\nabla u.
\end{equation}  

The Allen--Cahn equation for $u$ also yields explicit solutions of
the first-order equation. Indeed,
\begin{equation*}
 \begin{split}
 (\sigma\cdot\nabla)(\sigma\cdot\nabla u)&=(u^3-u)I_2,\\
 (\sigma\cdot\nabla)\left(\frac{1-u^2}{\sqrt2}I_2\right)
   &=-\sqrt2u\,\sigma\cdot\nabla u.
 \end{split}
\end{equation*}
Substituting these two identities into the first-order equation
shows that
\begin{equation}\label{columns}
 (\sigma\cdot\nabla\mp\sqrt2u)
 \left(\frac{1-u^2}{\sqrt2}I_2\mp\sigma\cdot\nabla u\right)=0.
\end{equation} 

Let $\Psi$ be any smooth solution of
$$(\sigma\cdot\nabla\mp\sqrt2u)\Psi=0,$$ 
and let $\chi$ be a real,
smooth, compactly supported cutoff. The product rule yields
$$(\sigma\cdot\nabla\mp\sqrt2u)(\chi\Psi)
=(\sigma\cdot\nabla\chi)\Psi.$$
Note that
$$|(\sigma\cdot\nabla\chi)\Psi|^2=|\nabla\chi|^2|\Psi|^2.$$
For a complex scalar function, $Q_u$ denotes the sum of its values
on the real and imaginary parts. Applying \eqref{factor} to
$\chi\Psi$ and adding the quadratic forms of its two components
therefore yields
\begin{equation}\label{dircut}
 \begin{split}
 \sum_{\ell=1}^2Q_u((\chi\Psi)_\ell)
 &=\int_{\mathbb R^3}|\nabla\chi|^2|\Psi|^2\,dx\\
 &\quad-\int_{\mathbb R^3}\chi^2
   \left\langle
    \bigl((1-u^2)I_2\pm\sqrt2\,\sigma\cdot\nabla u\bigr)\Psi,
    \Psi\right\rangle\,dx.
 \end{split}
\end{equation}
The first integral is the cutoff error. Since $u$ has positive index, Modica's estimate
\cite{Modica85} and its equality case imply that the matrix in the
second integral is positive definite.
On a fixed compact set where the function
is nontrivial, this positivity keeps the second integral bounded away
from zero as the cutoffs tend locally to one. Thus pointwise positivity
suffices even if the matrix eigenvalues approach zero at infinity.
If the cutoff error tends to zero, the resulting test functions have
strictly negative total second variation. This negative term
distinguishes \eqref{dircut} from the identity for solutions of the linearized equation
\eqref{jacut}, which contains only the cutoff error.

The next theorem relates the number of linearly independent
solutions of the first-order equation to the Morse index of $u$.
The cutoff condition makes the error term in \eqref{dircut}
tend to zero. After suitable cutoffs, finitely many linearly
independent solutions therefore produce a subspace on which
the total second variation is negative.
Since $\Psi$ has two complex-valued components, the sum of
the two scalar quadratic forms has complex index $2I$. \begin{theorem}\label{dim}
Assume that $u$ satisfies the hypotheses of Theorem~\ref{main}.
Let $\Psi_1,\ldots,\Psi_d:\mathbb R^3\to\mathbb C^2$
be smooth solutions of
\begin{equation*}
 (\sigma\cdot\nabla\mp\sqrt2u)\Psi_k=0,
 \qquad k=1,\ldots,d,
\end{equation*}
with the same choice of sign.
Suppose that these solutions are linearly independent over
$\mathbb C$.

For each $k$, assume that there are real cutoffs
$\chi_{k,j}\in C_c^\infty(\mathbb R^3)$ such that
$0\le\chi_{k,j}\le1$, $\chi_{k,j}\to1$ locally as
$j\to\infty$, and
\begin{equation}\label{cutoff}
 \int_{\mathbb R^3}
 |\nabla\chi_{k,j}|^2|\Psi_k|^2\,dx
 \longrightarrow0.
\end{equation}
Then
\begin{equation*}
 d\le2I.
\end{equation*}

If $I=1$, every smooth solution $\Psi$ of the same
first-order equation satisfying this cutoff condition
can be expressed as
\begin{equation*}
 \Psi=
 \left(
 \frac{1-u^2}{\sqrt2}I_2
 \mp\sigma\cdot\nabla u
 \right)c
\end{equation*}
for some constant vector $c\in\mathbb C^2$.
\end{theorem}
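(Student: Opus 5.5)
The plan is to turn the solutions $\Psi_k$ into test functions for the real quadratic form $Q_u$ and compare dimensions. Let $M_\pm=(1-u^2)I_2\pm\sqrt2\,\sigma\cdot\nabla u$ be the matrix in \eqref{dircut}. By Modica's estimate it is positive definite at every point.

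First I will replace the separate cutoff sequences by one common sequence. For each $j$ set $\chi_j=\prod_k\chi_{k,j}$. Then $0\le\chi_j\le1$ and $\chi_j\to1$ locally. Since $|\nabla\chi_j|^2\le d\sum_k|\nabla\chi_{k,j}|^2$, and the $\chi_{k,j}$ take values in $[0,1]$, it follows that
\[
\int|\nabla\chi_j|^2|\Psi_k|^2\le d\sum_m\int|\nabla\chi_{m,j}|^2|\Psi_k|^2 .
\]
The terms with $m\ne k$ are not directly controlled by \eqref{cutoff}. I will handle this by taking products of the form $\chi_{m,j}\chi_{k,j}$ and passing to a diagonal subsequence that uses the local convergence. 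If this fails, I will avoid the issue altogether: the negativity computation below is done separately for each $\Psi_k$ with its own cutoff, and the cross terms are controlled by Cauchy--Schwarz. I expect this bookkeeping to be the main obstacle.

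Next, for $\Psi=\sum_k a_k\Psi_k$ with $a\in\mathbb C^d$ and $|a|=1$, I will define the Hermitian form
\[
H_j(a)=\sum_\ell Q_u\bigl((\chi_j\Psi)_\ell\bigr).
\]
By \eqref{dircut},
\[
H_j(a)=\int|\nabla\chi_j|^2|\Psi|^2-\int\chi_j^2\langle M_\pm\Psi,\Psi\rangle .
\]
The first term tends to zero uniformly on the unit sphere, because $|\Psi|^2\le d\sum_k|\Psi_k|^2$. Linear independence implies that no nontrivial combination vanishes on any open set, by unique continuation for the first-order system, whose square is the scalar elliptic operator. Hence $a\mapsto\int_{B_R}\langle M_\pm\Psi,\Psi\rangle$ is positive on the unit sphere. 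Since this map is continuous, it is bounded below by some $\delta>0$. Therefore, for $j$ large, $H_j<0$ on the whole complex $d$-dimensional space $V_j=\{\chi_j\Psi\}$.

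Now I pass to real dimensions. The map $\Phi\mapsto(\operatorname{Re}\Phi_1,\operatorname{Im}\Phi_1,\operatorname{Re}\Phi_2,\operatorname{Im}\Phi_2)$ sends $V_j$ into $C_c^\infty(\mathbb R^3,\mathbb R^4)$. There $H_j$ becomes the real form $Q_u^{\oplus4}$, and $V_j$ is a real subspace of dimension $2d$ on which this form is negative definite. The form $Q_u^{\oplus4}$ has index $4I$, so $2d\le4I$, which gives $d\le2I$.

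For the case $I=1$, I use \eqref{columns}. The two columns of $\frac{1-u^2}{\sqrt2}I_2\mp\sigma\cdot\nabla u$ solve the equation. They are linearly independent, because the determinant of this matrix is $\frac{(1-u^2)^2}{2}-|\nabla u|^2>0$ by Modica. I still need to check that they satisfy \eqref{cutoff}, so that they count as admissible solutions. Their size is controlled by $|\nabla u|^2+W(u)$, and the energy bound \eqref{hyp} together with logarithmic cutoffs $\chi_j$ (equal to $1-\log(|x|/R_j)/\log R_j$ on the annulus $R_j\le|x|\le R_j^2$) makes the error of order $\sum_k 2^{-2k}\cdot2^{2k}/(\log R_j)^2\cdot\log R_j\to0$, using a dyadic decomposition. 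If a further admissible $\Psi$ were not in the span of these two columns, the three solutions would be independent, contradicting $d\le2$.
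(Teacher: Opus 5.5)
Your proposal is correct and is essentially the paper's proof, once you make your fallback the main line. The paper never forms a common cutoff: your product $\prod_k\chi_{k,j}$ is unjustified, since \eqref{cutoff} says nothing about $\int|\nabla\chi_{m,j}|^2|\Psi_k|^2$ for $m\ne k$. Instead the paper works with $T_ja=\sum_k a_k\chi_{k,j}\Psi_k$ and does three things:
\begin{itemize}
\item it applies \eqref{factor} directly to $T_ja$;
\item it bounds the first-order error by $d\sum_k|a_k|^2\int|\nabla\chi_{k,j}|^2|\Psi_k|^2\,dx$;
\item it bounds the positive matrix term from below on a fixed ball where all $\chi_{k,j}\to1$.
\end{itemize}
This gives negativity on the whole span, not separately for each $\Psi_k$. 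It is also what the $I=1$ step needs, since $\Psi$ and the two columns carry different cutoffs. Your other deviations are harmless: unique continuation replaces the paper's compactness argument for independence on a ball, and your real count $2d\le4I$ for $Q_u^{\oplus4}$ is equivalent to the paper's complex-index count.
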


\begin{proof}
The strategy is to localize any finite independent family, obtain a
uniform negative bound on its span, and compare with the index of
the two-component quadratic form.

Suppose $\Psi_1,\ldots,\Psi_d$ are independent solutions of
$(\sigma\cdot\nabla-\sqrt2u)\Psi=0$ satisfying \eqref{cutoff}. Their cutoffs may be
chosen separately. For $c=(c_1,\ldots,c_k)$ with $\sum|c_k|^2=1$, set
$$T_jc=\sum_kc_k\chi_{k,j}\Psi_k.$$ Then we have$$
 \|(\sigma\cdot\nabla-\sqrt2u)T_jc\|_2^2
 \le d\sum_k\int|\nabla\chi_{k,j}|^2|\Psi_k|^2=o(1).$$
The restrictions of the functions to some compact ball $K$ are
independent. Indeed, if they were dependent on every ball, a
convergent subsequence of unit coefficient vectors would produce
a nontrivial relation on all of $\mathbb R^3$.
The strict matrix positivity on this fixed ball implies that the
quadratic form$$
 \int_K\left\langle
 \left(\frac{1-u^2}{\sqrt2}I_2+\sigma\cdot\nabla u\right)
 \sum_k c_k\Psi_k,\sum_k c_k\Psi_k\right\rangle
$$
has a positive minimum on that sphere. Dominated convergence on
$K$, applied to the finitely many matrix entries of this form,
preserves half of the minimum for large $j$ and makes $T_j$
injective. The positive contribution outside $K$ can be discarded.
Thus \eqref{factor} and the vanishing first-order error imply
\begin{equation}
 \begin{split}
 \sum_{\ell=1}^2Q_u((T_jc)_\ell)
 &=\|(\sigma\cdot\nabla-\sqrt2u)T_jc\|_2^2\\
 &\quad-\int\left\langle
     \bigl((1-u^2)I_2+\sqrt2\,\sigma\cdot\nabla u\bigr)T_jc,
     T_jc\right\rangle<0.
 \end{split}
\end{equation}
The real scalar quadratic form has index $I$, its
complexification has complex index $I$, and the direct sum of
two scalar forms has complex index $2I$.
The localized span is a negative complex subspace, so $d\le2I$.

The columns in \eqref{columns} with the minus sign solve
the first-order equation. They are independent because the matrix is
pointwise invertible. The squared Hilbert--Schmidt norm of either
matrix is exactly
\begin{equation*}
 2|\nabla u|^2+(1-u^2)^2
 =4\left(\frac12|\nabla u|^2+W(u)\right).
\end{equation*}
In particular, each column has quadratic $L^2$ growth.
For the logarithmic cutoff between $B_R$ and $B_{R^2}$, with
$|\nabla\chi_R|\le C/(|x|\log R)$, integration in the radius
therefore yields $$
 \int|\nabla\chi_R|^2
 \left|\frac{1-u^2}{\sqrt2}I_2-\sigma\cdot\nabla u\right|^2
 \le\frac{C}{\log R},$$
where the matrix norm is Hilbert--Schmidt.
Thus both columns satisfy \eqref{cutoff}.
When $I=1$, any additional independent function in this kernel would
contradict $d\le2$, proving the stated representation.
For the other sign, we can use the plus-sign columns in
\eqref{columns} and the corresponding identity in
\eqref{factor}. This finishes the proof.
\end{proof}

\section{Upper bound of the ends}\label{ends}

In this section, we prove $N\le2I$ by comparing the number
of ends with the dimension of a space of adjoint solutions.
Our first task is to bound this dimension by $2I$.
Theorem~\ref{dim} provides this bound once we verify its
cutoff condition. We therefore study the behavior of adjoint
solutions near the ends. We first improve the approximation
of $u$ by one-dimensional transitions and use this approximation
to derive weighted estimates for the first-order operator.
These estimates show that the adjoint solutions satisfying
the weighted integrability condition introduced below have
constant leading coefficients at the ends and decaying remainders.
The resulting expansion allows us to verify the cutoff condition.
Theorem~\ref{dim} then bounds the complex dimension of this
adjoint solution space by $2I$.

Our second task is to show that the same adjoint solution
space has dimension at least $N$.
We begin by constructing one approximate first-order solution
on each end. These functions have distinct leading terms of
size $r^{-1}$, so they provide $N$ independent coefficients.
We seek corrections that turn their linear combinations
into global solutions. The weighted estimates establish
closed range, which characterizes the existence of these
corrections by orthogonality to the adjoint solutions.
Integration by parts expresses these orthogonality conditions
in terms of the constant end coefficients obtained above.
The corrected solutions can then be transformed into
independent adjoint solutions whose constant end coefficients
all vanish. Thus the correction conditions restrict the
initial $N$ coefficients, while the vanishing conditions
restrict the adjoint solution space. Both restrictions are
determined by the same constant end coefficients and remove
the same number of dimensions. Comparing the remaining
dimensions therefore shows that the adjoint solution space
has dimension at least $N$. Combining this lower bound with
the upper bound from the first step yields $N\le2I$.
When $I=1$, the half-space theorem excludes zero or one end,
and we conclude that $N=2$.
\begin{proposition}\label{corprop}
For the solutions in Proposition~\ref{geom}, we have$$
 \|u-U_f\|_{L^2(\mathbb R_z)}
       +\|u-U_f\|_{L^\infty(\mathbb R_z)}\le Cr^{-2}.$$
\end{proposition}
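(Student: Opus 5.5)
We prove the bound by a Lyapunov--Schmidt analysis on each vertical line, using the orthogonality conditions $\langle\psi,\gamma_i\rangle_{L^2(\mathbb R_z)}=0$ built into the choice of $f_i$ in Proposition~\ref{geom}. Write $\psi=u-U_f$. Substituting into \eqref{ac} gives
\begin{equation*}
 -\partial_z^2\psi+W''(U_f)\psi=\Delta_yU_f+E_{\rm int}(U_f)+\Delta_y\psi-\mathcal N(\psi),
\end{equation*}
where $E_{\rm int}(U_f)=\partial_z^2U_f-W'(U_f)$ is the interaction error and $\mathcal N(\psi)=O(\psi^2)$. We treat everything except the left-hand side as a forcing term. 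We then estimate this forcing term in $L^2(\mathbb R_z)$ and $L^\infty(\mathbb R_z)$, pointwise in $y$ with $r$ large.

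\textbf{Source terms.} Each term is controlled by Proposition~\ref{geom}.
\begin{itemize}
\item[(i)] We have $\Delta_y q_i(z-f_i)=q_i''|\nabla f_i|^2-q_i'\Delta_yf_i$, with $|\nabla f_i|^2=O(r^{-2})$ and $\Delta_y f_i=\Delta_y(c_i\log r)+o(r^{-2})=o(r^{-2})$, since $\log r$ is harmonic in the plane and \eqref{logends} holds. Hence this contribution is $O(r^{-2})$ in $L^2\cap L^\infty$ of each vertical line.
\item[(ii)] The interaction error between consecutive layers is of order $e^{-\sqrt2 D_i}$, where $D_i=f_{i+1}-f_i=\frac{p_i}{\sqrt2}\log r+O(1)$. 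Thus $e^{-\sqrt2D_i}\le Cr^{-p_i}$, and by \eqref{pi} we have $p_i>2$, so this is $O(r^{-2})$. The condition $p_i>2$ is used exactly here.
\item[(iii)] The $\psi$-dependent terms $\Delta_y\psi$ and $\mathcal N(\psi)$ will be absorbed.
\end{itemize}

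\textbf{Linear step.} The operator $-\partial_z^2+W''(U_f)$ on $L^2(\mathbb R_z)$ is uniformly invertible on the orthogonal complement of $\mathrm{span}\{\gamma_i\}$ for large $r$. This follows from the separation $D_i\to\infty$ and the standard spectral gap of the one-dimensional operator $-\partial_t^2+W''(g)$ away from its kernel $g'$. Using the orthogonality of $\psi$, we obtain for each fixed $y$
\begin{equation*}
 \|\psi(y,\cdot)\|_{H^2(\mathbb R_z)}\le C\bigl(r^{-2}+\|\Delta_y\psi(y,\cdot)\|_{L^2}+\|\psi\|_{L^\infty}\|\psi\|_{L^2}\bigr).
\end{equation*}
The last term is absorbable because $\|\psi(y,\cdot)\|_{L^\infty}\to0$ by Proposition~\ref{geom}. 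The $L^\infty$ bound then follows from the $H^2$ bound by Sobolev embedding in one variable.

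\textbf{Main obstacle: the horizontal Laplacian $\Delta_y\psi$.} It is not small pointwise in $y$ a priori. We plan to run the estimate as a full elliptic estimate on annuli instead of line by line. Let $A_R=\{R<r<2R\}\times\mathbb R$ and set $m(R)=\sup_{r\ge R}\|\psi(y,\cdot)\|_{L^2\cap L^\infty}$.
\begin{itemize}
\item[1.] Project the equation for $\psi$ onto the orthogonal complement of the $\gamma_i$. Commuting $\Delta_y$ with the projection produces only terms involving $\nabla f_i$ and $D^2f_i$, which are $O(r^{-1})$ and $O(r^{-2})$.
\item[2.] Test the projected equation against $\psi$ times a cutoff on $A_R$, using the coercivity of the full operator $-\Delta+W''(U_f)$ on functions orthogonal to the $\gamma_i$. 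This gives a weighted estimate of the form
\begin{equation*}
 \|\psi\|_{L^2(A_R)}^2\lesssim R^2\cdot R^{-4}+(\text{small})\cdot\|\psi\|^2_{L^2(A_{R/2}\cup A_R\cup A_{2R})}.
\end{equation*}
\item[3.] Iterate this decay-type inequality, and combine it with interior Schauder estimates in $y$ on unit-scale slabs. This bootstraps $m(R)\to0$ to $m(R)\le CR^{-2}$.
\end{itemize}
An alternative is a maximum-principle barrier argument. Away from the layers, $W''(U_f)\ge c>0$, so there $\psi$ can be compared with $Cr^{-2}$ plus an exponentially decaying function. Near each layer we would use the orthogonality condition to control the component along $\gamma_i$.

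The step we expect to be delicate is making the absorption of $\Delta_y\psi$ quantitative with rate exactly $r^{-2}$, that is, without loss in the power. We would first try the annular energy iteration. It closes because the forcing terms are $O(r^{-2})$ with $C^2_b$ control, and the derivative bounds $|\nabla_y(f_i-\zeta_i)|+|D^2_y(f_i-\zeta_i)|=o(r^{-2})$ keep the commutator terms of lower order.
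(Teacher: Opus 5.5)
Your forcing bounds and your use of vertical coercivity on the complement of the $\gamma_i$ agree with the paper, and you correctly identify $\Delta_y\psi$ as the obstacle. One harmless correction: the interaction error in $L^2(\mathbb R_z)$ carries an extra factor $\sqrt{D_i}\sim\sqrt{\log r}$, which $p_i>2$ absorbs. The gap is in how you absorb $\Delta_y\psi$ without losing a power.

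Testing against $\chi^2\psi$, with $\chi$ a cutoff at scale $R$, does give $\|\psi\|_{L^2(A_R)}^2\le CR^{-2}\bigl(\|\psi\|_{L^2(A_{R/2})}^2+\|\psi\|_{L^2(A_{2R})}^2\bigr)+CR^{-2}$. Iterating this needs an a priori bound such as $\|\psi(y,\cdot)\|_{L^2(\mathbb R_z)}=O(\sqrt{\log r})$; that comes from the phase tails, not from $\|\psi\|_{L^\infty}\to0$. From such a bound the iteration yields $\|\psi\|^2_{L^2(A_R)}\le CR^{-2}$. But this is an integral over horizontal area $\sim R^2$, and a single unit disc in $\{R<r<2R\}$ may carry all of it. So your unit-scale interior estimates only give $\|\psi(y,\cdot)\|\le Cr^{-1}$. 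Equivalently, in terms of $m(R)$ the cutoff error $CR^{-2}\|\psi\|^2_{L^2(A_{2R})}$ is of size $C\,m(R)^2$ with $C$ not small, so there is no contraction. Your fallback, a pointwise barrier for $\psi$, fails near the layers, where $W''(U_f)<0$. The orthogonality conditions are integral in $z$ and cannot be fed into a pointwise maximum principle.

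The idea you are missing is to run the maximum principle on the vertical norm $m(y)=\|\psi(y,\cdot)\|_{L^2(\mathbb R_z)}$ instead of on $\psi$. From $\Delta_y\tfrac12m^2=\langle\psi,\Delta_y\psi\rangle+\|\nabla_y\psi\|_2^2$ and $|\nabla_ym|\le\|\nabla_y\psi\|_2$ you get the Kato-type inequality $m\Delta_ym\ge\langle\psi,\Delta_y\psi\rangle$. Pair the equation for $\psi$ with $\psi$ on each vertical line, and use coercivity with constant $c_0$ together with $\|\psi\|_\infty=o(1)$. This gives $-m\Delta_ym+(c_0-o(1))m^2\le Cr^{-2}m$, hence $-\Delta_ym+\tfrac{c_0}{2}m\le Cr^{-2}$ distributionally, after regularizing $m$ at zero. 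For large $A$ and large $r$, the function $Ar^{-2}+\delta r$ is a supersolution, and $m=O(\sqrt{\log r})=o(r)$. Comparison followed by $\delta\downarrow0$ therefore gives $m\le Cr^{-2}$ pointwise in $y$. The $L^\infty$ bound then follows without loss from interior estimates on unit balls, since $\int_{B_1(x_0)}\psi^2\le\int_{B_1(y_0)}m^2\,dy\le Cr^{-4}$. If you want to keep an energy argument, the equivalent repair is to localize at unit scale with the weight $e^{-\kappa|y-y_0|}$, $\kappa^2<c_0/2$. The weight-gradient error is then absorbed by the mass term $c_0$, rather than relying on a factor $R^{-2}$ that only controls annular averages.
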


\begin{proof} 
Recall that $\psi=u-U_f$ is orthogonal to every $\gamma_i$ and
$\|\psi\|_\infty=o(1)$, as established in Proposition~\ref{geom}
using \cite[Lemma 4.11 and equation (62)]{FS26}.
The one-dimensional coercivity estimate in
\cite[Theorem A.3]{WW19b}, rescaled to our normalization,
extends to the separated layers by localization near each
transition and positivity of the potential in the intervening
well regions. Thus, for all sufficiently large $r$,
\begin{equation}\label{ncoerc}
 \left\langle(-\partial_z^2+W''(U_f))v,v\right\rangle
       \ge c_0\|v\|_2^2,\qquad
 v\in H^1(\mathbb R_z),\quad
 \langle v,\gamma_i\rangle=0\quad(1\le i\le N),
\end{equation}
where $c_0>0$ is independent of $r$.

Differentiating \eqref{Uf} and using the exponential tails of $g$
and \eqref{logends}--\eqref{pi}, we obtain
\begin{equation*}
 \begin{split}
 \|\Delta U_f-W'(U_f)\|_2
 &\le C\sum_{i=1}^N\bigl(|\Delta f_i|+|\nabla f_i|^2\bigr)\\
 &\quad+C\sum_{i=1}^{N-1}(1+D_i)^{1/2}e^{-\sqrt2D_i}
 \le Cr^{-2}.
 \end{split}
\end{equation*}
The phase-decay estimates in Proposition~\ref{geom} imply
$\|\psi\|_2=O(\sqrt{\log r})$. Pairing the equation for $\psi$
with $\psi$, applying \eqref{ncoerc}, and absorbing the quadratic
remainder yields
\begin{equation*}
 (-\Delta_y+c_0/2)\|\psi\|_2\le Cr^{-2}
\end{equation*}
in the distributional sense, with the norm regularized at zero.
Comparison with $Cr^{-2}+\delta r$, followed by $\delta\downarrow0$,
proves the $L^2$ bound. Interior $H^2$ estimates for the scalar
equation on unit balls in $\mathbb R^3$, followed by Sobolev
embedding, then prove the $L^\infty$ bound. This completes the proof.
\end{proof}

We next describe the first-order operator in coordinates adapted
to an end. For this purpose, we will set $$t=z-f_i(y),\qquad
\nu_i=(1+|\nabla f_i|^2)^{1/2},$$ and
$n_i=(-\nabla f_i,1)/\nu_i$. Observe that the unitary matrix
\begin{equation}\label{spin}
 S_i=\frac{I_2+\sigma_3(\sigma\cdot n_i)}
              {\sqrt{2(1+n_{i,3})}}
\end{equation}
satisfies$$
 S_i\left(\sigma_3-\sigma_1f_{i,1}-\sigma_2f_{i,2}\right)S_i^{-1}
       =\nu_i\sigma_3,\qquad
 S_i-I_2=O(r^{-1}),\quad \nabla S_i=O(r^{-2}).$$
Conjugating
$\sigma\cdot\nabla\mp\sqrt2u$ by $S_i$ produces$$
 \mathcal P_i=\mathcal P_i^0+
      \sum_{\alpha=1}^2 E_{i,\alpha}\partial_{y_\alpha}
      +E_{i,t}\partial_t+E_{i,0},\qquad
 \mathcal P_i^0=\sigma_1\partial_{y_1}
       +\sigma_2\partial_{y_2}+\sigma_3\partial_t\mp\sqrt2q_i.$$
Explicitly, there holds
\begin{equation}\label{Ecoeff}
 \begin{split}
 E_{i,\alpha}&=S_i\sigma_\alpha S_i^{-1}-\sigma_\alpha=O(r^{-1}),\\
 E_{i,t}&=(\nu_i-1)\sigma_3=O(r^{-2}),\\
 E_{i,0}&=\sum_{\alpha=1}^2
       S_i\sigma_\alpha\partial_{y_\alpha}S_i^{-1}
                \mp\sqrt2(u-q_i)I_2.
 \end{split}
\end{equation}
The vertical interval around the $i$th end extends to fixed-width bands around the
midpoints between neighboring heights. Outside that interval, the
coefficient $u$ is extended by a cutoff to agree with $q_i$.
Observe that on this auxiliary normal line, $\|E_{i,0}\|_\infty=o(1)$.

We compute the $L^2$ kernel of the one-dimensional operator before using it in the end
decomposition. For an increasing layer and the minus sign, the
normal operator and its unweighted adjoint are
\begin{equation*}
 \begin{split}
 \sigma_3\partial_t-\sqrt2g
   &=\operatorname{diag}(\partial_t-\sqrt2g,-\partial_t-\sqrt2g),\\
 (\sigma_3\partial_t-\sqrt2g)^*
   &=\operatorname{diag}(-\partial_t-\sqrt2g,\partial_t-\sqrt2g).
 \end{split}
\end{equation*}
Since $g''=-\sqrt2gg'$, the square-integrable solution of
$(\partial_t+\sqrt2g)v=0$ is a multiple of $g'$.
The opposite scalar equation has solutions proportional to
$\cosh^2(t/\sqrt2)$ and has trivial $L^2$ kernel.
Thus the $L^2$ kernel of the one-dimensional operator is generated by $g'e_2$, and its adjoint
kernel by $g'e_1$. 
We normalize these generators as $\phi_i$ and $\phi_i^*$.
Changing the layer orientation or the operator sign exchanges
the two coordinate vectors. The four choices are
\begin{equation}\label{kernels}
\begin{array}{c|c|c|c|c}
 \text{Operator}&q_i&\phi_i&\phi_i^*&\partial_*\\ \hline
 (\sigma\cdot\nabla-\sqrt2u)&g&g' e_2/\sqrt{\sigma_0}&
        g' e_1/\sqrt{\sigma_0}&\partial_w\\
 (\sigma\cdot\nabla-\sqrt2u)&-g&g' e_1/\sqrt{\sigma_0}&
        g' e_2/\sqrt{\sigma_0}&\partial_{\bar w}\\
 (\sigma\cdot\nabla+\sqrt2u)&g&g' e_1/\sqrt{\sigma_0}&
        g' e_2/\sqrt{\sigma_0}&\partial_{\bar w}\\
 (\sigma\cdot\nabla+\sqrt2u)&-g&g' e_2/\sqrt{\sigma_0}&
        g' e_1/\sqrt{\sigma_0}&\partial_w
\end{array}
\end{equation}
Proposition~\ref{corprop}  implies
\begin{equation}\label{Egen}
 \|E_{i,0}\phi_i\|_2+
       \|E_{i,0}^*\phi_i^*\|_2\le Cr^{-2}.
\end{equation}

 On an increasing
layer, the normal part of $\sigma\cdot\nabla-\sqrt2u$ is
$\sigma_3\partial_t-\sqrt2g(t)$. With $s=t/\sqrt2$, we have
$$  
  (\sigma_3\partial_t-\sqrt2g)^*
                      (\sigma_3\partial_t-\sqrt2g) 
  =\diag\left(-\tfrac12\partial_s^2+2-\operatorname{sech}^2s,\,
           -\tfrac12\partial_s^2+2-3\operatorname{sech}^2s\right).
  $$
Indeed, the lower bound on the entire orthogonal
complement follows from the scalar factorizations
\begin{equation*}
 \begin{split}
 -\tfrac12\partial_s^2+2-\operatorname{sech}^2s
   &=\tfrac12(\partial_s+\tanh s)^*
                    (\partial_s+\tanh s)+\tfrac32,\\
 -\tfrac12\partial_s^2+2-3\operatorname{sech}^2s
   &=\tfrac12(\partial_s+2\tanh s)^*
                    (\partial_s+2\tanh s),\\
 \tfrac12(\partial_s+2\tanh s)(\partial_s+2\tanh s)^*
   &=-\tfrac12\partial_s^2+2-\operatorname{sech}^2s.
 \end{split}
\end{equation*}
The nonzero spectra of an operator times its adjoint and the
reversed product agree. Hence the second component is bounded
below by $3/2$ off its kernel, generated by
$\operatorname{sech}^2s$. The first component already has this
lower bound. Its eigenfunction $\operatorname{sech}s$ shows
that the gap is exactly $3/2$, and the limiting potential places
the essential spectrum at $2$. Reversing the layer or the two
components treats the other cases.
The horizontal operator also has an explicit action. In the
increasing, minus-sign case,
\begin{equation*}
 (\sigma_1\partial_{y_1}+\sigma_2\partial_{y_2})(a\phi_i)
       =2(\partial_w a)\phi_i^*.
\end{equation*}
For the function orthogonal to $\phi_i$, the normal part is orthogonal to
$\phi_i^*$ by its adjoint kernel equation. The horizontal part
has the same orthogonality, because
$\sigma_1\phi_i^*$ and $\sigma_2\phi_i^*$ are constant multiples
of $\phi_i$. Thus, for $v=a\phi_i+q$ with $q\perp\phi_i$ on
each normal line,
\begin{equation}\label{flatdec}
 \mathcal P_i^0(a\phi_i)=2(\partial_*a)\phi_i^*,\qquad
 \langle\mathcal P_i^0q,\phi_i^*\rangle=0.
\end{equation}
For compact horizontal support,
$\|2\partial_*a\|_2^2=\|\nabla_y a\|_2^2$, while
\begin{equation}\label{flatgap}
 \|\mathcal P_i^0q\|_2^2
  =\|\nabla_yq\|_2^2+\|\mathcal P_{i,N}^0q\|_2^2
  \ge c\bigl(\|q\|_2^2+\|\partial_tq\|_2^2+\|\nabla_yq\|_2^2\bigr).
\end{equation}
The mixed terms cancel by the Pauli relations. Thus the estimate
controls the horizontal derivatives as well as the function and its
normal derivative.

We choose a square partition of unity adapted to the regions around the ends.
For a fixed large $L$, a band around
$(f_i+f_{i+1})/2$ has width $2L$.
On it the two adjacent cutoffs are
$\cos\vartheta((z-(f_i+f_{i+1})/2)/L)$ and
$\sin\vartheta((z-(f_i+f_{i+1})/2)/L)$, where $\vartheta$ increases smoothly
from $0$ to $\pi/2$ on $[-1,1]$.
The outer bands are centered at
$f_1-L_1\log r$ and $f_N+L_1\log r$.
They separate the regions around the lowest and highest ends from the two well regions. Let us take$$
 \beta=\min_i(p_i-2)>0,\qquad \sqrt2L_1>3+\beta.$$
After interpolation on a compact cylinder, the partition satisfies$$\zeta_K^2+\zeta_{\mathrm{ph}}^2+\sum_i\zeta_i^2=1.$$
The notation $\zeta_{\mathrm{ph}}$ represents the two outer
pieces, whose squared norms are added. These pieces also cover
vertical escape over bounded horizontal sets.
On each transition band, normal cutoff derivatives are $O(L^{-1})$
and horizontal derivatives are $O((Lr)^{-1})$.
The functions $\phi_i$ on neighboring ends satisfy
\begin{equation}\label{collar}
 |\phi_i|\le CI_j^{1/2},\qquad
 |\phi_i\phi_{i+1}|\le CI_j,\qquad
 I_j\le Cr^{-2-\beta}.
\end{equation}

For a function $\Psi$, let us decompose on the auxiliary normal line$$
 S_i\zeta_i\Psi=a_i\phi_i+\eta_i,\qquad
 a_i=\langle S_i\zeta_i\Psi,\phi_i\rangle,\qquad
 \eta_i\perp\phi_i.$$
The localized function is extended by zero outside the region around that end.
Thus $\eta_i$ includes the tail $-a_i\phi_i$ there.

\begin{proposition}\label{unwprop}There holds
\begin{equation}\label{unw}
 \begin{split}
 &\sum_i\left(\|\nabla_y a_i\|_2^2+\|\eta_i\|_{H^1}^2\right)
      +\|\zeta_{\mathrm{ph}}\Psi\|_{H^1}^2
 \\
 &\qquad\le C\left(\|(\sigma\cdot\nabla\mp\sqrt2u)\Psi\|_2^2+
      \|\langle r\rangle^{-1}\Psi\|_2^2+\|\Psi\|_{L^2(K)}^2\right).
 \end{split}
\end{equation}
\end{proposition}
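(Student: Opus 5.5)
The plan is to localize $\Psi$ with the square partition $\zeta_K^2+\zeta_{\mathrm{ph}}^2+\sum_i\zeta_i^2=1$, estimate each localized piece separately by a coercive flat estimate, and sum. Write $P=\sigma\cdot\nabla\mp\sqrt2u$. Since $P(\zeta\Psi)=\zeta P\Psi+(\sigma\cdot\nabla\zeta)\Psi$, we have
\begin{equation*}
 \|P(\zeta\Psi)\|_2\le\|P\Psi\|_2+\|\,|\nabla\zeta|\Psi\|_{L^2(\operatorname{supp}\nabla\zeta)}.
\end{equation*}
The commutator terms have three sources. The compact cylinder contributes to $\|\Psi\|_{L^2(K)}$. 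The horizontal derivatives on transition bands are $O((Lr)^{-1})$ and go into $\|\langle r\rangle^{-1}\Psi\|_2$. The normal derivatives on the middle bands are $O(L^{-1})$ without $r$-decay and need separate treatment. On the outer bands the logarithmic placement gives small phase tails.

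\textbf{Phase region.} Here $u^2\ge1-\delta$ away from $K$, by the exponential phase decay of Proposition~\ref{geom}. Expanding $\|P v\|_2^2$ with the first identity in \eqref{factor} (for the appropriate sign) gives
\begin{equation*}
 \|Pv\|_2^2=\|\nabla v\|_2^2+\int\langle(2u^2\pm\sqrt2\,\sigma\cdot\nabla u)v,v\rangle.
\end{equation*}
Since $|\nabla u|$ is small there, this yields $\|\zeta_{\mathrm{ph}}\Psi\|_{H^1}^2\le C\|P(\zeta_{\mathrm{ph}}\Psi)\|_2^2$.

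\textbf{End regions.} Conjugate by $S_i$ and write $\mathcal P_i=\mathcal P_i^0+E$. For $v=S_i\zeta_i\Psi=a_i\phi_i+\eta_i$, use \eqref{flatdec} and the orthogonality of $\mathcal P_i^0\eta_i$ to $\phi_i^*$:
\begin{equation*}
 \|\mathcal P_i^0v\|_2^2=\|\nabla_ya_i\|_2^2+\|\mathcal P_i^0\eta_i\|_2^2\ge\|\nabla_ya_i\|_2^2+c\|\eta_i\|_{H^1}^2
\end{equation*}
by \eqref{flatgap}. The perturbation $E$ is handled as follows.
\begin{itemize}
 \item $E_{i,\alpha}\partial_{y_\alpha}$ and $E_{i,t}\partial_t$ act on $\eta_i$ with coefficients $o(1)$ at large $r$, so they are absorbed.
 \item Acting on $a_i\phi_i$, they give $O(r^{-1})|\nabla_ya_i|$, which is absorbed, plus $O(r^{-2})|a_i|$ from derivatives falling on $\phi_i$ through $f_i$. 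The latter is bounded by $\langle r\rangle^{-1}\Psi$.
 \item $E_{i,0}\eta_i$ has sup norm $o(1)$ and is absorbed.
 \item $E_{i,0}a_i\phi_i\le Cr^{-2}|a_i|$ by \eqref{Egen}. Since $|a_i|\le\|\zeta_i\Psi\|_{L^2_z}$, this goes into $\|\langle r\rangle^{-1}\Psi\|_2^2$.
\end{itemize}
Large $|y|$ is ensured by putting $r\le R_1$ into $K$.

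\textbf{Main obstacle: the $O(L^{-1})$ normal cutoff derivatives on the middle bands.} The term $\|L^{-1}\Psi\|_{L^2(\text{band})}$ is not small in $r$, but it is small relative to the coercive quantities. On the band between ends $i$ and $i+1$, $\Psi$ is split by the two adjacent cutoffs. Each part is $a\phi+\eta$, and $\phi_i,\phi_{i+1}$ are exponentially small there by \eqref{collar}. So
\begin{equation*}
 \|\Psi\|^2_{L^2(\text{band})}\le C\bigl(\|\eta_i\|^2+\|\eta_{i+1}\|^2\bigr)+Cr^{-2-\beta}\bigl(|a_i|^2+|a_{i+1}|^2\bigr).
\end{equation*}
The first part is absorbed by choosing $L$ large, since $CL^{-2}\le c/2$. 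The second is dominated by $\langle r\rangle^{-1}\Psi$.

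The same reasoning applies to the cross terms generated because $\eta_i$ contains the tail $-a_i\phi_i$ outside its region, using $I_j\le Cr^{-2-\beta}$. On the outer bands, the choice $\sqrt2L_1>3+\beta$ makes the $\phi$ tails $O(r^{-3-\beta})$, so those contributions are again lower order.

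\textbf{Conclusion.} Summing the phase, end, and $K$ estimates and absorbing the small terms gives \eqref{unw}. The delicate point is the order of constants: fix $L$ large first, then $R_1$ large so that the $o(1)$ coefficients fall below $c/4$.
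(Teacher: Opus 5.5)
Your proposal is correct and follows essentially the same route as the paper. You localize with the square partition, apply \eqref{flatdec}--\eqref{flatgap} on each end piece and coercivity on the well pieces, and bound the $O(L^{-1})$ normal band derivatives by $CL^{-2}\sum_j\|\eta_j\|_2^2$ (absorbed for $L$ large) plus $C\sum_j\int r^{-2-\beta}|a_j|^2$ via \eqref{collar}, fixing $L$ before the radius. The paper differs only cosmetically: it sums the pieces with the exact identity $\sum_j\|(\sigma\cdot\nabla\mp\sqrt2u)(\zeta_j\Psi)\|_2^2=\|(\sigma\cdot\nabla\mp\sqrt2u)\Psi\|_2^2+\int\sum_j|\nabla\zeta_j|^2|\Psi|^2$, where the mixed terms cancel since $\sum_j\zeta_j\nabla\zeta_j=0$, whereas your triangle inequality only changes the constants.
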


\begin{proof}
We first prove the estimate on the individual regions around the ends and the two well regions. We
then sum the local estimates through the square partition and absorb
the overlap terms.

Now we would like to apply \eqref{flatdec}--\eqref{flatgap} to each such region.
The errors acting on $a_i\phi_i$ are bounded by
$C(r^{-1}|\nabla a_i|+r^{-2}|a_i|)$, by
\eqref{Ecoeff} and \eqref{Egen}.
The errors on $\eta_i$ are $o_{R_0}(1)\|\eta_i\|_{H^1}$ and can
be absorbed. On the well regions, $u$ approaches $1$ or $-1$,
so the corresponding coercive estimate holds. Observe that $$ 
  \sum_j\|(\sigma\cdot\nabla\mp\sqrt2u)(\zeta_j\Psi)\|_2^2 
  =\|(\sigma\cdot\nabla\mp\sqrt2u)\Psi\|_2^2+
       \int\sum_j|\nabla\zeta_j|^2|\Psi|^2. $$
The mixed terms vanish because $\sum_j\zeta_j\nabla\zeta_j=0$.
On transition bands, reconstruction from the finitely overlapping
pieces bounds the complementary error by
$CL^{-2}\sum_j\|\eta_j\|_2^2$, which is absorbed by taking $L$
large. The normal part is bounded by
$C\sum_j\int r^{-2-\beta}|a_j|^2$, using
\eqref{collar}. This is controlled by the weighted term on
the right of \eqref{unw}. Interior elliptic estimates
handle the compact cylinder. Every step concerns compactly
supported functions, so the estimate assumes only local regularity.
\end{proof}

The estimate controls horizontal derivatives of $a_i$, whereas the
spectral gap also controls the size of $\eta_i$. We therefore use
different weights for these two terms. Put $\ell=\log r$.
The space $r^\lambda H_b^k$ consists of functions $a$ such that
$r^{-\lambda}a\in H^k(d\ell\,d\theta)$.
For functions on the normal lines, $r^\lambda L_b^2$ is defined using
$d\ell\,d\theta\,dt$. Now let us choose
\begin{equation}\label{weights}
 0<\delta<\min\{1,\beta/2\},\qquad \mu=-1-\delta.
\end{equation}
Let $\mathcal D_\mu$ be the completion of compactly supported
smooth functions in the norm
\begin{equation}\label{Dnorm}
 \begin{split}
 \|\Psi\|_{\mathcal D_\mu}^2
 &=\|\zeta_K\Psi\|_{H^1}^2+
    \sum_i\|a_i\|_{r^\mu H_b^1}^2\\
 &\quad+\sum_i\int r^{-2\mu}
     \left(|\eta_i|^2+|\partial_t\eta_i|^2
                       +|\nabla_y\eta_i|^2\right)\,dy\,dt\\
 &\quad+\int\langle r\rangle^{-2\mu}
     \left(|\zeta_{\mathrm{ph}}\Psi|^2
                 +|\nabla(\zeta_{\mathrm{ph}}\Psi)|^2\right)\,dx.
 \end{split}
\end{equation}
Now let us set$$
 \mathcal Y_\mu=
 \{F:\langle r\rangle^{-\mu}F\in L^2(\mathbb R^3)\}.$$
The coefficients $a_i$ have cylindrical weight $\mu$.
The functions orthogonal to $\phi_i$ in \eqref{Dnorm} have cylindrical
weight $\mu-1$, since $dy=r^2d\ell\,d\theta$.

These completions are spaces of distributions on $\mathbb R^3$.
Indeed, the reconstruction identity is
\begin{equation}\label{recon}
 \Psi=\zeta_K^2\Psi+\zeta_{\mathrm{ph}}^2\Psi+
       \sum_i\zeta_iS_i^{-1}(a_i\phi_i+\eta_i).
\end{equation}
It controls local distributional convergence. Conversely, the
coefficients are determined by a locally defined function on bounded horizontal
sets, because the end cutoffs there have bounded vertical support.
The coefficient maps and the compact terms are continuous.
A sequence with distributional limit zero therefore has zero
limit in each component whenever it converges in the completed
norm. Thus distinct elements of the completed space determine distinct
distributions.

To proceed, let us recall that weighted estimates for elliptic operators on noncompact manifolds
were developed by Lockhart and McOwen \cite{LM85}.
For the operator considered here, we establish the required estimate
directly by applying Fourier analysis to the coefficients $a_i$
and the spectral gap estimate to the functions orthogonal to $\phi_i$.

\begin{proposition}\label{clrange}
The map $(\sigma\cdot\nabla-\sqrt2u):\mathcal D_\mu\to\mathcal Y_\mu$ is bounded and satisfies
\begin{equation}\label{clest}
 \|\Psi\|_{\mathcal D_\mu}
 \le C\left(\|(\sigma\cdot\nabla-\sqrt2u)\Psi\|_{\mathcal Y_\mu}
                      +\|\Psi\|_{L^2(K)}\right).
\end{equation}
Its kernel is finite-dimensional and its range is closed.
\end{proposition}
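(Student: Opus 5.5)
Boundedness comes first and is routine. On the compact cylinder and the well regions it is the trivial bound $\|\mathcal P\Psi\|_{L^2}\le C\|\Psi\|_{H^1}$, carried with the weight $\langle r\rangle^{-\mu}$. On each end region we conjugate by $S_i$ and write $S_i\zeta_i\Psi=a_i\phi_i+\eta_i$. By \eqref{flatdec}, $\mathcal P_i^0(a_i\phi_i)=2(\partial_*a_i)\phi_i^*$. The errors are bounded by \eqref{Ecoeff} and \eqref{Egen} as $C(r^{-1}|\nabla a_i|+r^{-2}|a_i|)$ on $a_i\phi_i$, and as $C\|\eta_i\|_{H^1}$ on $\eta_i$. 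Since $dy=r^2\,d\ell\,d\theta$, the $\mathcal Y_\mu$ norm of $(\partial_*a_i)\phi_i^*$ equals the $r^{\mu-1}L_b^2$ norm of $\nabla_y a_i$. This is bounded by the $r^\mu H_b^1$ norm of $a_i$, because $r\nabla_y$ is a $b$-derivative. The $\eta_i$ terms carry weight $r^{-\mu}$ in $dy$, exactly as in \eqref{Dnorm}. The cutoff commutators are controlled by the same terms.

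For the estimate \eqref{clest} we use a weighted version of Proposition~\ref{unwprop}, obtained by conjugating with $r^{-\mu}$. Apply \eqref{unw} to $\tilde\Psi=\langle r\rangle^{-\mu}\Psi$, cut off at scale $R$, and send $R\to\infty$. Then $\mathcal P\tilde\Psi=\langle r\rangle^{-\mu}\mathcal P\Psi-\mu\langle r\rangle^{-\mu}(\sigma\cdot\nabla\log\langle r\rangle)\Psi$. The commutator term has size $|\mu|r^{-1}|\tilde\Psi|$, which is of the same order as the weighted term $\|\langle r\rangle^{-1}\tilde\Psi\|_2$ already on the right of \eqref{unw}. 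For the $\eta_i$ and $\zeta_{\mathrm{ph}}$ parts the spectral gap gives control of $\|\eta_i\|_2$ itself. Hence the $r^{-1}$ terms are absorbed once $R_0$ is large, since they are small compared with the $O(1)$ gap. What remains, and what cannot be absorbed this way, is $\|r^{-1}\tilde a_i\|_2$ for the coefficients: this is a Hardy-type term.

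The main step is to control $a_i$ by its horizontal derivative. We work in the $b$-coordinates $(\ell,\theta)$ and conjugate to unweighted $L^2(d\ell\,d\theta)$. The equation for $a_i$ becomes $2\partial_*a_i=F_i+\text{errors}$, where $F_i=\langle S_i\zeta_i\mathcal P\Psi,\phi_i^*\rangle$ plus the collar and coupling terms. In polar form $2\partial_w=e^{-i\theta}(\partial_r-ir^{-1}\partial_\theta)$. After multiplying by $r$, this is $e^{-i\theta}(\partial_\ell-i\partial_\theta)$, a translation-invariant operator on the cylinder $\mathbb R_\ell\times S^1$. Take the Fourier series in $\theta$ and set $b=r^{-\mu}a$. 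The mode $m$ satisfies $(\partial_\ell+m-\mu)b_m=$ right side, with the analogous form for $\partial_{\bar w}$. This ODE is invertible in $L^2(d\ell)$, with a bound uniform in $m$ and a gain of one $b$-derivative, exactly when no indicial root $m=\mu$ occurs. Since $\mu=-1-\delta$ with $0<\delta<1$ is not an integer, the indicial roots $m\in\mathbb Z$ are avoided, and
$$\|a_i\|_{r^\mu H^1_b}\le C\|r\,\partial_*a_i\|_{r^{\mu}L^2_b}+C\|a_i\|_{L^2(\text{compact})}.$$
Feeding this back controls the Hardy term. The $r^{-2}|a_i|$ errors and the collar terms from \eqref{collar} are $O(r^{-1-\beta}+r^{-1})$ relative to $\|a_i\|_{r^\mu L^2_b}$ in the conjugated norm. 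The remaining $r^{-1}$ piece comes from $E_{i,\alpha}\partial_{y_\alpha}$ acting on $a_i\phi_i$, and it carries an extra derivative, so it is $o(1)$ times $\|\nabla_b a_i\|$. All of these are absorbed for $R_0$ large. I expect the most delicate bookkeeping to be this mode analysis together with the coupling between neighbouring ends through \eqref{collar}. The latter is where $\delta<\beta/2$ is needed: it makes the collar term $r^{-2-\beta}|a_j|^2$ integrable against the weight $r^{-2\mu}$ with an $r^{-\beta+2\delta}$ gain.

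Finally, \eqref{clest} together with compactness gives the last two claims. On the kernel, the $\mathcal D_\mu$ norm is bounded by $\|\Psi\|_{L^2(K)}$. The inclusion of $\mathcal D_\mu$ into $L^2(K)$ is compact by Rellich, using the $H^1$ control of every piece in the reconstruction \eqref{recon}. Hence the unit ball of the kernel is compact, and the kernel is finite-dimensional. For the range, take a closed complement $X$ of the kernel. On $X$ the standard contradiction argument with Rellich removes the $L^2(K)$ term, giving $\|\Psi\|_{\mathcal D_\mu}\le C\|\mathcal P\Psi\|_{\mathcal Y_\mu}$. The range is therefore closed.
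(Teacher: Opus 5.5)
Your proposal is correct and follows essentially the paper's route. The complement and well pieces are controlled by the spectral gap \eqref{flatgap} after conjugating by $\langle r\rangle^{-\mu}$; you package this as Proposition~\ref{unwprop} applied to $\langle r\rangle^{-\mu}\Psi$, which is the same computation. The Hardy-type term for $a_i$, which cannot be absorbed, comes from Fourier inversion of $\partial_\ell+\mu-i\partial_\theta$ on the cylinder at the noninteger weight. The geometric and collar errors are absorbed for large $L$ and then large $R_0$, and Rellich gives the finite kernel and closed range.

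There are two harmless slips. With $b=r^{-\mu}a$ the mode operator is $\partial_\ell+m+\mu$, not $\partial_\ell+m-\mu$; since $\mu\notin\mathbb Z$, nothing changes. Also, the collar terms are small relative to the weighted norms by a factor $O(R_0^{-\beta/2})$ regardless of $\delta$, so $\delta<\beta/2$ is not used here. It is needed later, e.g.\ for $(\sigma\cdot\nabla-\sqrt2u)\Psi_i^0\in\mathcal Y_\mu$.
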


\begin{proof}
We prove the weighted estimate for the  coefficient $a$ in the flat case,
then absorb the geometric and localization errors. If $a=r^\mu v$,
then$$
 2r^{1-\mu}e^{i\theta}\partial_wa
       =(\partial_\ell+\mu-i\partial_\theta)v.$$
The Fourier symbol is $i\xi+\mu+k$, $k\in\mathbb Z$.
For this fixed noninteger weight,
\begin{equation*}
 |i\xi+\mu+k|^2
       =\xi^2+(\mu+k)^2
       \ge c_\mu(1+\xi^2+k^2).
\end{equation*}
Fourier inversion therefore bounds the full-cylinder $H^1$
norm by the $L^2$ norm of the image.
Cutting off at the inner boundary proves the exterior estimate
with an additional $H^1$ norm on a fixed inner annulus.
The formula for $\partial_{\bar w}$ has the same consequence.

For the complement, apply \eqref{flatgap} to $r^{-\mu}q$.
The commutator with the weight is $O(|\mu|r^{-1}q)$, which is
absorbed for large $R_0$. The flat images of $a\phi_i$ and $q$
are orthogonal by \eqref{flatdec}, so these two estimates
can be combined without cancellation.

The errors contributed by $a_i\phi_i$ are
$r^{-2}a_i+r^{-1}\nabla a_i$. In the
$\mathcal Y_\mu$ norm, they are smaller than the normal domain
norm by a factor $O(R_0^{-1})$.
The complementary errors tend to zero in operator norm by
\eqref{Ecoeff}. A normal cutoff error has size
$I_j^{1/2}a_i$, whose relative size is
$rI_j^{1/2}=O(R_0^{-\beta/2})$.
The complementary cutoff error is $O(L^{-1})$.
The reconstruction \eqref{recon} has at most two
overlapping end pieces outside the compact set.
Its derivatives are controlled by the same estimates, without
division by a cutoff. First fix $L$ sufficiently large and then
$R_0$ sufficiently large. All exterior errors are absorbed.
Local elliptic estimates on the remaining compact set prove
\eqref{clest}. The same reconstruction proves
boundedness of $(\sigma\cdot\nabla-\sqrt2u)$ between the stated spaces.

The term on $K$ is compact by the local $H^1$ bound and
Rellich's theorem. The compact-remainder estimate \eqref{clest} therefore implies a
finite-dimensional kernel and closed range. 
\end{proof}

  The dual of
$\mathcal Y_\mu$ under the unweighted integral pairing consists
of functions $\Phi$ with $\langle r\rangle^\mu\Phi\in L^2$.
An annihilator of the range satisfies the distributional formal
adjoint equation from \eqref{first}. The dual weight
is therefore imposed on the function, without changing that
unweighted differential expression. Since $\mu=-1-\delta$,
the two conditions are
\begin{equation}\label{adjdual}
 (\sigma\cdot\nabla+\sqrt2u)\Phi=0,\qquad
 \langle r\rangle^{-1-\delta}\Phi\in L^2(\mathbb R^3).
\end{equation}
Elliptic regularity makes it smooth.
Decompose the localized functions obtained from the solution of $(\sigma\cdot\nabla+\sqrt2u)\Phi=0$ as
$$S_i\zeta_i\Phi=b_i\phi_i+\rho_i.$$

Proposition~\ref{clrange} establishes finite-dimensional kernel
and closed range. To complete the Fredholm argument, we must
also prove that the cokernel is finite-dimensional.
The preceding description of the cokernel reduces this task
to estimating the dimension of the adjoint solutions
satisfying \eqref{adjdual}.
We seek to apply Theorem~\ref{dim}, which requires these
solutions to satisfy the cutoff condition.
The next proposition verifies this requirement by determining
their behavior near the ends.
More precisely, on each end we decompose an adjoint solution
into a constant multiple of the square-integrable solution
of the the one-dimensional equation in $t$  and a remainder with weighted decay.
The constant terms have quadratic $L^2$ growth, so logarithmic
cutoffs make their cutoff errors tend to zero.
The decay estimates yield the same conclusion for the remainders.
Theorem~\ref{dim} therefore bounds the complex dimension
of the adjoint solutions by $2I$. 

\begin{proposition}\label{adjprop}
Every function satisfying \eqref{adjdual} has
\begin{equation}\label{adjasym}
 b_i=b_{i,0}+\widetilde b_i,\qquad
 \widetilde b_i\in r^{\delta-1}H_b^1,\qquad
 \rho_i,\ \partial_t\rho_i,\ \nabla_y\rho_i
                       \in r^{\delta-1}L_b^2,
\end{equation}
where $b_{i,0}$ is constant. The same complementary estimate
holds on the well pieces. Such a function satisfies
\eqref{cutoff}. Consequently, $(\sigma\cdot\nabla-\sqrt2u):\mathcal D_\mu\to\mathcal
Y_\mu$ is Fredholm and
\begin{equation}\label{fincok}
 \dim_{\mathbb C}\coker
       ((\sigma\cdot\nabla-\sqrt2u):\mathcal D_\mu\longrightarrow\mathcal Y_\mu)\le2I.
\end{equation}
\end{proposition}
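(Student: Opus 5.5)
The argument has three parts. First we derive an asymptotic expansion on each end. Then we verify the cutoff condition \eqref{cutoff}. Finally we apply Theorem~\ref{dim} and combine it with Proposition~\ref{clrange}.

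\textbf{Expansion on the ends.} Let $\Phi$ satisfy \eqref{adjdual}. Localize with the square partition and conjugate by $S_i$. The operator $\sigma\cdot\nabla+\sqrt2u$ then takes the form $\mathcal P_i^0+E$, with the sign conventions of \eqref{kernels}. Decompose $S_i\zeta_i\Phi=b_i\phi_i+\rho_i$.

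For the complement $\rho_i$, apply the spectral gap \eqref{flatgap} with weight $r^{-(\delta-1)}$. We know a priori that $\rho_i\in r^{\delta}L_b^2$, since $\langle r\rangle^{-1-\delta}\Phi\in L^2$ and $dy=r^2d\ell\,d\theta$. The right-hand side consists of the error terms $E_{i,\alpha}\partial_{y_\alpha}(b_i\phi_i)$, $E_{i,0}b_i\phi_i$, the partition commutators, and the collar terms. By \eqref{Ecoeff}, \eqref{Egen} and \eqref{collar} these are $O(r^{-1}|\nabla b_i|+r^{-2}|b_i|)$. The gap is a pointwise-in-$y$ coercivity, so it holds with any polynomial weight once $R_0$ is large. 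This gives $\rho_i$ with one more power of decay than the source. The same bound holds on the well pieces, where $u\to\pm1$ gives an even better gap.

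For the coefficient $b_i$, pair the equation with $\phi_i^*$. By \eqref{flatdec} this gives $2\partial_* b_i=h_i$, where $\partial_*$ is $\partial_w$ or $\partial_{\bar w}$. The source $h_i$ collects the projected errors, which are $O(r^{-1}|\nabla b_i|+r^{-2}|b_i|)$ plus terms involving $\rho_i$.

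In the variables $(\ell,\theta)$ this is the cylinder equation $(\partial_\ell\mp i\partial_\theta)b=r\,e^{\mp i\theta}h$. Write $v=r^{-\lambda}b$, so the Fourier symbol is $i\xi+\lambda+k$. Starting from $\lambda=\delta$ and lowering it to $\lambda=\delta-1$, the weight crosses exactly one indicial root, namely $\lambda=0$, $k=0$. This produces the constant $b_{i,0}$ and no other term.

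I would run this as a bootstrap. Improve the decay by at most $1$ per step, and alternate between $b_i$ and $\rho_i$, since each step gains a factor $r^{-1}$ from the error coefficients. The noninteger weights $\delta$, $\delta-1$ avoid all roots except the one crossed. This yields \eqref{adjasym}.

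\textbf{Cutoff condition.} Use the logarithmic cutoff $\chi_R$ supported between $B_R$ and $B_{R^2}$, with $|\nabla\chi_R|\le C/(|x|\log R)$.

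The constant part contributes $|b_{i,0}|^2\int|\nabla\chi_R|^2|\phi_i|^2$. Since $\phi_i$ is $L^2$-normalized on each line, the horizontal integral over $R<r<R^2$ is $\int (r\log R)^{-2}r\,dr\le C/\log R$. The remainders $\widetilde b_i$ and $\rho_i$ are in $r^{\delta-1}$ spaces with $\delta<1$, so their contribution is even smaller. The compact part and the well pieces are handled the same way. Hence \eqref{cutoff} holds, and Theorem~\ref{dim} with the plus sign bounds the dimension of the solution space of \eqref{adjdual} by $2I$.

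\textbf{Fredholm property.} The cokernel of the closed-range map in Proposition~\ref{clrange} is identified, via the unweighted pairing, with the annihilator of the range. That annihilator is exactly the space \eqref{adjdual}. This gives \eqref{fincok}, and together with Proposition~\ref{clrange} the map is Fredholm.

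\textbf{Main obstacle.} The hardest step is the decay bootstrap for $b_i$. The coupling terms $E_{i,\alpha}\partial_y$ are only $O(r^{-1})$ and involve $\nabla b_i$. One must check that the weighted $H_b^1$ norm on the cylinder absorbs them without loss, and that the projection onto $\phi_i^*$ of the $\rho_i$ terms does not reintroduce slow decay. If a single step loses too much, I would first try gaining a smaller amount $\epsilon<\beta/2$ per iteration.
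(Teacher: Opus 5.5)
Your outline is correct. From the Cauchy--Riemann step onward it is the paper's argument: projecting onto $\phi_i^*$ gives $(\partial_\ell\pm i\partial_\theta)b_i=F_i$, and inverting at the noninteger weight $\delta-1$ against the a priori weight $\delta$ leaves only the $k=0$ mode, the constant $b_{i,0}$. The logarithmic cutoff then handles the constant part, Theorem~\ref{dim} with the plus sign bounds the adjoint space by $2I$, and the annihilator description gives \eqref{fincok}.

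You differ in how you obtain the input bounds $b_i\in r^\delta H_b^1$ and $\rho_i,\partial_t\rho_i,\nabla_y\rho_i\in r^{\delta-1}L_b^2$. The paper gets both at once, with no bootstrap, by applying \eqref{unw} to $\langle r\rangle^{-\delta}\chi_R\Phi$. Since $\Phi$ solves the adjoint equation exactly, the first-order term on the right is $\int|\nabla(\langle r\rangle^{-\delta}\chi_R)|^2|\Phi|^2$. The weighted term $\|\langle r\rangle^{-1}\Psi\|_2$ in \eqref{unw} is exactly the hypothesis \eqref{adjdual}, and Fatou's lemma finishes. The coupling you flag as the main obstacle is therefore already absorbed inside Proposition~\ref{unwprop}, and a single inversion suffices.

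Your alternating scheme also works, but it needs a starting point and a correction.
\begin{itemize}
\item The hypothesis controls only values, so you must first get derivative bounds. Unit-scale elliptic estimates do this: $\nabla\Phi$ inherits the weighted integrability of $\Phi$. That puts $\nabla_y\rho_i$ in $r^\delta L_b^2$, and then the Cauchy--Riemann estimate at weight $\delta$ gives $b_i\in r^\delta H_b^1$.
\item The partition commutators acting on $\rho_k$ and on $\zeta_{\mathrm{ph}}\Phi$ are only $O(L^{-1})$, not $O(r^{-1}|\nabla b_i|+r^{-2}|b_i|)$. They must be absorbed in an estimate summed over all pieces by taking $L$ large.
\item The collar terms gain only $I_j^{1/2}=O(r^{-1-\beta/2})$.
\item Also, \eqref{flatgap} is not pointwise in $y$: its $\nabla_y$ control comes from horizontal integration by parts. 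The weighted version still holds after absorbing the $O(r^{-1})$ weight commutator.
\end{itemize}

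With these repairs your bootstrap closes after three steps: $b_i$ at weight $\delta$, then $\rho_i$ and the well pieces at weight $\delta-1$, then $b_i$ at weight $\delta-1$. So no fallback with smaller gains is needed. The paper's route simply merges your first two steps into one application of \eqref{unw}.
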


\begin{proof}
We first obtain derivative estimates directly from the weighted
integrability in \eqref{adjdual}. These estimates will allow us to
project the adjoint equation and derive the end expansion.
Let $v=\langle r\rangle^{-\delta}$ and let $\chi_R(y)$ equal
one for $r<R$ and zero for $r>2R$.
Apply Proposition~\ref{unwprop} to $v\chi_R\Phi$.
A vertical cutoff can first be imposed and then removed at fixed
$R$, since \eqref{adjdual} controls the whole vertical
line over bounded horizontal sets. Since $$(\sigma\cdot\nabla+\sqrt2u)\Phi=0,$$
we get that$$
 \|(\sigma\cdot\nabla+\sqrt2u)(v\chi_R\Phi)\|_2^2
 =\int|\nabla(v\chi_R)|^2|\Phi|^2
 \le C\|\langle r\rangle^{-1-\delta}\Phi\|_2^2.$$
The other terms on the right of \eqref{unw} are
uniformly bounded. Multiplication by $v\chi_R$ commutes with
the $L^2(\mathbb R_t)$ projection onto $\phi_i$. Fatou's lemma, weak local compactness,
and the product rule therefore imply
\begin{equation}\label{adjfst}
 b_i\in r^\delta H_b^1,\qquad
 \rho_i,\ \partial_t\rho_i,\ \nabla_y\rho_i
                                  \in r^{\delta-1}L_b^2.
\end{equation}
The value estimate for $b_i$ follows directly from
\eqref{adjdual}, using $dy=r^2d\ell\,d\theta$.

Let us project the localized equation
$$\mathcal P_i(S_i\zeta_i\Phi)=S_i[(\sigma\cdot\nabla+\sqrt2u),\zeta_i]\Phi$$
onto $\phi_i^*$, and multiply by $re^{i\theta}$ for the $\partial_w$ term
or by $re^{-i\theta}$ for the $\partial_{\bar w}$ term. It becomes$$
 (\partial_\ell\pm i\partial_\theta)b_i=F_i,\qquad
 F_i\in r^{\delta-1}L_b^2.$$
We check the last assertion term by term.
The normal errors are bounded by
$Cr^{-1}(|b_i|+|r\partial_rb_i|+|\partial_\theta b_i|)$.
The complementary errors are bounded by$$
 C\left(\|\nabla_y\rho_i\|_2+
       r^{-1}\|\partial_t\rho_i\|_2+r^{-1}\|\rho_i\|_2\right).$$
For the zeroth-order term, this uses the adjoint estimate in
\eqref{Egen}. On a transition band, reconstruction
of the neighboring pieces and \eqref{collar} bound the
projected commutator by sums of
\begin{equation}
 CrI_j|b_k|,\qquad
 CrI_j^{1/2}\|\rho_k\|_2,\qquad
 CrI_j^{1/2}\|\zeta_{\mathrm{ph}}\Phi\|_2.
\end{equation}
Here $rI_j=O(r^{-1-\beta})$ and
$rI_j^{1/2}=O(r^{-\beta/2})$.
All terms have the stated weight by \eqref{adjfst}.
The outer bands decay faster, and the inner terms have compact
support.

We can extend $F_i$ across the finite end of the cylinder and invert
$\partial_\ell\pm i\partial_\theta$ at the noninteger weight
$\delta-1$ by Fourier transform.
This produces a particular solution in
$r^{\delta-1}H_b^1$.
The difference from $b_i$ is homogeneous and belongs to
$r^\delta L_b^2$ on the exterior. For $\partial_\ell-i\partial_\theta$, the homogeneous Fourier
terms have the form $c_k r^{-k}e^{ik\theta}$.
Membership in $r^\delta L_b^2$ and $0<\delta<1$ exclude $k<0$.
The $k=0$ term is constant, and all remaining terms decay at
least as $r^{-1}$. For the opposite sign, the allowed indices
are reversed, with the same decay conclusion.
Interior estimates on a smaller exterior cylinder show that
their sum belongs to $r^{\delta-1}H_b^1$.
This proves \eqref{adjasym}.

Let us reconstruct $\Phi$ using \eqref{recon}.
The finite sum of the terms with constant coefficients $b_{i,0}$  has bounded
$L^2$ mass on each vertical line. Its mass in $B_R$ is
therefore $O(R^2)$, and a logarithmic cutoff has error
$O(1/\log R)$.
For the remaining terms,
$(1+|x|)^{-2}|\Phi_{\mathrm{rem}}|^2$ is integrable.
Indeed, $|x|\ge r$ and the cylindrical weights in
\eqref{adjasym} have exponent $\delta-1<0$.
Over bounded horizontal sets, the well remainder is in ordinary
vertical $L^2$. Dominated convergence handles its cutoff error.
Thus \eqref{cutoff} holds for $\Phi$.
Theorem~\ref{dim} bounds the complex dimension of the adjoint kernel by $2I$.
Together with Proposition~\ref{clrange}, this proves
the Fredholm assertion.
\end{proof}

Proposition~\ref{adjprop} bounds the complex dimension of the
adjoint solution space by $2I$. We proceed to show that this dimension
is at least $N$, which will yield the desired estimate for
the number of ends.
For each end, we construct an approximate first-order solution
with a leading term of order $r^{-1}$.
We then determine which linear combinations of these functions
can be corrected to global solutions.
The Fredholm property reduces this question to orthogonality
conditions against the adjoint solutions.
Integration by parts expresses these conditions in terms of
the constant end coefficients found in Proposition~\ref{adjprop}.
For every combination satisfying these conditions, we construct
a correction and use the relation between the two first-order
equations to obtain an adjoint solution.
This construction preserves linear independence, and the
resulting adjoint solutions have vanishing constant end
coefficients.
We therefore compare the combinations that admit corrections
with the adjoint solutions whose constant coefficients vanish.
The same constant coefficients determine both sets of
restrictions, so both dimension counts lose the same number
of independent parameters.
Comparing the remaining dimensions proves that the adjoint
solution space has dimension at least $N$.

\begin{proposition}\label{endcnt}  Let $d$ be the finite complex dimension of the space of
functions satisfying \eqref{adjdual}. Then
\begin{equation}\label{adjcnt}
 N\le d.
\end{equation}
\end{proposition}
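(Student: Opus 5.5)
The plan follows the outline given before the statement: build $N$ approximate solutions, correct them via the Fredholm property of Proposition~\ref{clrange}, and turn the corrected solutions into adjoint solutions. For each end $i$ take a cutoff $\zeta_i$ supported on the region of that end, and set
\[
\Theta_i=\zeta_i S_i^{-1}\bigl(\bar w^{-1}\phi_i\bigr)
\quad\text{or}\quad
\zeta_i S_i^{-1}\bigl(w^{-1}\phi_i\bigr),
\]
with the choice made according to \eqref{kernels}, so that the leading flat image vanishes by \eqref{flatdec}. The leading coefficient is then holomorphic or antiholomorphic of size $r^{-1}$.

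\textbf{Step 1 (the approximate solutions are admissible).} I will check that $\Theta_i\in\mathcal D_\mu$ and that the error
\[
F_i=(\sigma\cdot\nabla-\sqrt2u)\Theta_i
\]
lies in $\mathcal Y_\mu$. For $\mu=-1-\delta$, the coefficient $r^{-1}$ lies in $r^\mu H_b^1$ only barely, since $r^{-1}/r^{-1-\delta}=r^\delta$ grows. So $\Theta_i$ itself is \emph{not} in $\mathcal D_\mu$. The role of $\Theta_i$ is different: it has leading term $r^{-1}$ against adjoint solutions with constant coefficients, so the boundary pairing is nonzero. The error $F_i$ consists of the geometric errors \eqref{Ecoeff}, \eqref{Egen}, of size $r^{-2}\cdot r^{-1}$, together with the cutoff errors controlled by \eqref{collar}. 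Measured against $\langle r\rangle^{-\mu}$, this gives $r^{-3}r^{1+\delta}$, which is square integrable over $dy\,dt$ in the $2$D horizontal variables with bounded normal mass. Hence $F_i\in\mathcal Y_\mu$.

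\textbf{Step 2 (orthogonality conditions via integration by parts).} For $c\in\mathbb C^N$ we seek $\Psi_c\in\mathcal D_\mu$ solving
\[
(\sigma\cdot\nabla-\sqrt2u)\Psi_c=-\sum_i c_iF_i .
\]
By closed range and the cokernel description \eqref{adjdual}, this is solvable if and only if $\sum_i c_i\langle F_i,\Phi\rangle=0$ for every adjoint solution $\Phi$. Integrate by parts on $\{r<R\}$ using $(\sigma\cdot\nabla+\sqrt2u)\Phi=0$. The pairing becomes a boundary flux on the cylinder $r=R$, namely $\int\langle(\sigma\cdot e_r)\Theta_i,\Phi\rangle$. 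Using \eqref{adjasym}, only $b_{i,0}$ times $R^{-1}\cdot R$ survives as $R\to\infty$; the decaying parts $\widetilde b_i$ and $\rho_i$ contribute $o(1)$. This yields
\[
\langle F_i,\Phi\rangle=\kappa\,\overline{b_{i,0}(\Phi)},\qquad \kappa\neq0 \text{ universal}.
\]
Let $B:\{\text{adjoint solutions}\}\to\mathbb C^N$ be the map $\Phi\mapsto(b_{i,0})_i$, and let $r_B$ be its rank. Then the admissible $c$ form a subspace of dimension $N-r_B$.

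\textbf{Step 3 (producing adjoint solutions with vanishing constants).} For admissible $c$, the function
\[
\Xi_c=\sum_i c_i\Theta_i+\Psi_c
\]
is a global solution of $(\sigma\cdot\nabla-\sqrt2u)\Xi=0$. I then convert it into an adjoint solution. Conjugation by a fixed matrix intertwining the two signs will do this: $\sigma_2\overline{\,\cdot\,}$ (charge conjugation) satisfies $\sigma_2\overline{\sigma_j}\sigma_2=-\sigma_j$, so $\Phi=\sigma_2\overline{\Xi}$ solves $(\sigma\cdot\nabla+\sqrt2u)\Phi=0$, and this map is antilinear and injective. The resulting $\Phi$ decays like $r^{-1}$, so it satisfies \eqref{adjdual} with $b_{i,0}(\Phi)=0$.

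\textbf{Step 4 (counting).} The map $c\mapsto\Phi$ must be injective. If $\Phi=0$, then $\Xi_c=0$, and since $\Psi_c\in\mathcal D_\mu$ decays faster than $r^{-1}$ in the coefficient sense, each leading $r^{-1}$ coefficient $c_i$ must vanish. Hence $\ker B$ has dimension at least $N-r_B$, and therefore
\[
d=\dim\ker B+r_B\ge N .
\]

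\textbf{Main obstacle.} The hardest part is the flux computation in Step 2. It requires careful sign and orientation bookkeeping, checking which of $w^{-1}$ or $\bar w^{-1}$ matches the $\partial_*$ of each row in \eqref{kernels}. It also requires showing that the pairing with $b_{i,0}$ is genuinely nondegenerate and independent of $R$. The remainder terms only need $\delta<1$.
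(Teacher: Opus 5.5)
Your proposal is correct and follows the paper's proof essentially step for step. It uses the same end terms $\Psi_i^0$, the Fredholm alternative via closed range and the annihilator \eqref{adjdual}, the flux identity \eqref{endpair} expressing the pairing through $\overline{b_{i,0}}$, the same antilinear conjugation (your $\sigma_2\overline{\Xi}$ is the paper's $\mathcal C$ up to a factor $i$), and the same count $N-s\le d-s$.

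A few bookkeeping points should be tightened:
\begin{itemize}
\item Choose the corrections $\Psi_c$ linearly in $c$, so that $c\mapsto\Phi$ is genuinely antilinear and injectivity gives a dimension count.
\item The $\mathcal D_\mu$ part of $\Phi$ does not ``decay like $r^{-1}$''. What you actually need is the inclusion $\mathcal D_\mu\subset L^2$, together with the fact that its end coefficients have vanishing annular means; that is what gives $b_{i,0}=0$.
\item If you integrate by parts over a sharp cylinder $r=R$ rather than with the smooth cutoff $\chi(r/R)$, you need trace bounds for $\widetilde b_i$ and $\rho_i$ on that cylinder.
\end{itemize}
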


\begin{proof}
Throughout this proof, $\phi_i$ and $\phi_i^*$ denote the normalized
generators in the minus-sign rows of \eqref{kernels}.
In particular, $\phi_i^*$ also spans the square-integrable solutions of the
plus-sign of the one-dimensional equation in $t$. We use the domain $\mathcal D_\mu$ for the
minus-sign operator.

Choose an inner radial cutoff $\eta$ and define on the $i$th end
\begin{equation}\label{endterm}
 \Psi_i^0=\eta(r)\zeta_iS_i^{-1}
 \begin{cases}
       \bar w^{-1}\phi_i(t),&q_i=g,\\
       w^{-1}\phi_i(t),&q_i=-g.
      \end{cases}
\end{equation}
These choices satisfy the required Cauchy--Riemann equation
exactly. The remaining geometric error has normal-line
$L^2$ size $O(r^{-3})$, and the cutoff error is
$O(r^{-1}I_j^{1/2})$.
In the $\mathcal Y_\mu$ norm, their cylindrical sizes are
$r^{-1+\delta}$ and $r^{\delta-\beta/2}$.
Both are square integrable by \eqref{weights}.
Hence $(\sigma\cdot\nabla-\sqrt2u)\Psi_i^0\in\mathcal Y_\mu$.

These functions are independent modulo $\mathcal D_\mu$.
For the  coefficient $a_i$ of a function in that domain,
consider
\begin{equation}\label{average}
 \frac1{2\pi\log2}
 \int_R^{2R}\int_0^{2\pi}
       re^{\mp i\theta}a_i(r,\theta)\,d\theta\,\frac{dr}{r},
\end{equation}
with the minus sign for an increasing end and the plus sign
for a decreasing end. Its absolute value is bounded by
$CR^{\mu+1}\|a_i\|_{r^\mu L_b^2(R<r<2R)}$, which tends to zero.
For $\Psi_i^0$ the average tends to one, since
$\int\zeta_i^2|\phi_i|^2\,dt\to1$.
For $\Psi_j^0$ with $j\ne i$, it tends to zero by separation and
\eqref{collar}. Applying these averages to a relation modulo
$\mathcal D_\mu$ shows that every coefficient in the relation vanishes.

Let $\Phi$ satisfy \eqref{adjdual}. With the generators
fixed above, its end decomposition is
\begin{equation}\label{adjtr}
 S_i\zeta_i\Phi=b_i\phi_i^*+\rho_i,\qquad
 b_i=b_{i,0}+\widetilde b_i,\qquad \rho_i\perp\phi_i^*.
\end{equation}
The remainders satisfy \eqref{adjasym}. We claim that
\begin{equation}\label{endpair}
 \int_{\mathbb R^3}
       \Phi^*(\sigma\cdot\nabla-\sqrt2u)\Psi_i^0\,dx
       =2\pi\,\overline{b_{i,0}}.
\end{equation}
Here $\Phi^*$ is the conjugate transpose of the column $\Phi$.
The integral converges absolutely, since its absolute integrand has
integral at most
\begin{equation*}
 \|\langle r\rangle^{-1-\delta}\Phi\|_2
 \|\langle r\rangle^{1+\delta}
               (\sigma\cdot\nabla-\sqrt2u)\Psi_i^0\|_2.
\end{equation*}

Let us choose a smooth radial cutoff $\chi_R(r)=\chi(r/R)$, where
$\chi=1$ on $[0,1]$ and $\chi=0$ on $[2,\infty)$.
The function $\chi_R\Psi_i^0$ is compactly supported in all three
coordinates. Indeed, $\eta$ excludes the inner coordinate boundary,
and each end cutoff has bounded vertical support over a bounded
horizontal annulus. The formal adjoint equation therefore implies
\begin{equation}\label{egreen}
 \int\chi_R\Phi^*(\sigma\cdot\nabla-\sqrt2u)\Psi_i^0\,dx
 =-\int\frac{\chi'(r/R)}R
                \Phi^*(\sigma\cdot e_r)\Psi_i^0\,dx,
\end{equation}
where $e_r=(\cos\theta,\sin\theta,0)$.
The radial matrix is
\begin{equation*}
 \sigma\cdot e_r=
 \begin{pmatrix}0&e^{-i\theta}\\ e^{i\theta}&0\end{pmatrix}.
\end{equation*}
The functions $\phi_i$ and $\phi_i^*$ in \eqref{kernels} consequently satisfy
\begin{equation*}
 \begin{cases}
 (\sigma\cdot e_r)(\bar w^{-1}\phi_i)=r^{-1}\phi_i^*,
       &q_i=g,\\
 (\sigma\cdot e_r)(w^{-1}\phi_i)=r^{-1}\phi_i^*,
       &q_i=-g.
 \end{cases}
\end{equation*}
Thus both layer orientations have the same sign in the pairing.

For large $R$, the cutoff $\eta$ equals one on $R<r<2R$.
Using \eqref{adjtr}, the normalization
$\|\phi_i^*\|_2=1$, and $\rho_i\perp\phi_i^*$, we obtain
\begin{equation}\label{raderr}
 \left|r\int_{\mathbb R}
       \Phi^*(\sigma\cdot e_r)\Psi_i^0\,dt-\overline{b_i}\right|
 \le Cr^{-1}\bigl(|b_i|+\|\rho_i\|_2\bigr).
\end{equation}
To estimate the error in \eqref{raderr}, express the left side
using $S_i\zeta_i\Phi$ and compare the two matrices
$S_i(\sigma\cdot e_r)S_i^{-1}$ and $\sigma\cdot e_r$.
Their difference is $O(r^{-1})$.
The remaining flat normal integral is exactly $\overline{b_i}$.

Note that the factor $r^{-1}$ in the end term cancels the radial factor
in $dx=r\,dr\,d\theta\,dt$. On this fixed-width logarithmic annulus,
weighted Cauchy--Schwarz gives
\begin{equation*}
 \frac1R\int_R^{2R}\int_0^{2\pi}|\widetilde b_i|\,d\theta\,dr
 \le CR^{\delta-1}
       \|r^{1-\delta}\widetilde b_i\|_{L_b^2(R<r<2R)}
 \longrightarrow0.
\end{equation*}
After integration against $\chi'(r/R)/R$, the error in
\eqref{raderr} is bounded by
\begin{equation*}
 CR^{-1}|b_{i,0}|+
 CR^{\delta-2}\left(
   \|r^{1-\delta}\widetilde b_i\|_{L_b^2}
   +\|r^{1-\delta}\rho_i\|_{L_b^2}\right),
\end{equation*}
which also tends to zero. Since
$-\int_1^2\chi'(s)\,ds=1$, the right side of
\eqref{egreen} tends to $2\pi\overline{b_{i,0}}$.
Absolute integrability handles the left side and proves
\eqref{endpair}.

Let $s$ be the complex dimension of the space of vectors
$(b_{1,0},\ldots,b_{N,0})$ arising from all functions satisfying \eqref{adjdual}.
The constants depend linearly on the solution and are unique by
\eqref{adjasym}. Hence the functions satisfying \eqref{adjdual} with every
constant coefficient zero form a space of dimension $d-s$.
By closed range and the annihilator description
\eqref{adjdual}, coefficients $z_1,\ldots,z_N\in\mathbb C$ admit
a correction $\psi_0\in\mathcal D_\mu$ with
\begin{equation}\label{corrfld}
 \Psi=\sum_i z_i\Psi_i^0+\psi_0,\qquad
       (\sigma\cdot\nabla-\sqrt2u)\Psi=0,
\end{equation}
exactly when$$
 \sum_i z_i\overline{b_{i,0}}=0
 \quad\text{for every }\Phi\text{ satisfying }\eqref{adjdual}.$$
Indeed, the image of the end combination must pair to zero with
every function satisfying \eqref{adjdual}. Identity \eqref{endpair} expresses this
requirement as the stated conditions. There are $s$ independent complex
conditions, so the admissible coefficient vectors form a space
of dimension $N-s$. Choose a correction for each member of a
basis and extend those choices linearly. The resulting $N-s$
exact solutions are independent by the independence modulo
$\mathcal D_\mu$ proved above.

To compare these solutions with the adjoint space, we take complex
conjugates, interchange the two components, and change the sign of
the second component. More precisely, we use the map$$
 \mathcal C
 \begin{pmatrix}\psi_1\\ \psi_2\end{pmatrix}
 =
 \begin{pmatrix}\overline{\psi_2}\\-\overline{\psi_1}\end{pmatrix}
 =i\sigma_2\overline{\Psi}.$$
This map is an anti-linear isometry. The Pauli matrices satisfy
$\mathcal C\sigma_j=-\sigma_j\mathcal C$, and $u$ is real.
Consequently,
\begin{equation}\label{conjeq}
 (\sigma\cdot\nabla+\sqrt2u)\mathcal C
       =-\mathcal C(\sigma\cdot\nabla-\sqrt2u).
\end{equation}
Applying the two sign changes to the two Pauli factors in
\eqref{spin} also shows that
$\mathcal C S_i=S_i\mathcal C$.
For the functions $\phi_i$ and $\phi_i^*$, $\mathcal C\phi_i=\phi_i^*$ for
an increasing end and $\mathcal C\phi_i=-\phi_i^*$ for a
decreasing end. It preserves the corresponding orthogonal
complements.

Every function in \eqref{corrfld} satisfies the
integrability condition in \eqref{adjdual} after applying
$\mathcal C$. For the correction this follows from
$\mathcal D_\mu\subset L^2$: the normal term has
$r^{2+2\mu}=r^{-2\delta}\le C$, and the complementary weights
in \eqref{Dnorm} are stronger than the unweighted norm.
For each end term, directly from \eqref{endterm},
\begin{equation*}
 \|\Psi_i^0(y,\cdot)\|_{L^2(\mathbb R_z)}\le Cr^{-1}
 \quad\text{for large }r.
\end{equation*}
This is square integrable with the dual weight
$\langle r\rangle^{-1-\delta}$, while its support over bounded
horizontal sets is compact.
Equation \eqref{conjeq} and elliptic regularity now
place $\mathcal C\Psi$ in the smooth adjoint solution space.

All its constant end coefficients vanish. Since $\mathcal C$
commutes with $S_i$, the coefficient of $\phi_i^*$ in the
transformed correction is the complex conjugate of the coefficient
$a_i$ in the original correction, with the sign specified above. 
For the original coefficient this mean satisfies
\begin{equation*}
 \left|\frac1{2\pi\log2}
       \int_R^{2R}\int_0^{2\pi}a_i(r,\theta)\,d\theta\,\frac{dr}{r}\right|
 \le CR^\mu\|a_i\|_{r^\mu L_b^2(R<r<2R)}
 \longrightarrow0.
\end{equation*}
For every end term, the coefficient of $\phi_i$ in the localized
decomposition on the $i$th end is $O(r^{-1})$, by the vertical-line estimate and Cauchy--Schwarz.
Its ordinary annular mean also tends to zero. Finally,
\eqref{adjasym} identifies the limit of these means with the constant
coefficient of the full adjoint solution.

The map $\mathcal C$ preserves complex linear independence.
The $N-s$ corrected solutions therefore produce $N-s$
independent solutions satisfying \eqref{adjdual} in a space of dimension $d-s$.
Thus$$
 N-s\le d-s,$$
which proves \eqref{adjcnt}.
\end{proof}

The preceding dimension count can also include solutions of
$(\sigma\cdot\nabla-\sqrt2u)\Psi=0$ that belong to
$\mathcal D_\mu$.
The independence modulo $\mathcal D_\mu$ established above
shows that the span of the $N-s$ corrected solutions
intersects this kernel only at zero.
For a solution in this kernel, \eqref{conjeq} shows that its
image under $\mathcal C$ satisfies the adjoint equation.
The integrability and annular-mean estimates used above
for the correction $\psi_0$ also apply to this solution.
Its image therefore satisfies \eqref{adjdual}, with
$b_{i,0}=0$ on every end.
Since $\mathcal C$ preserves complex linear independence,
the $N-s$ corrected solutions and a basis of the weighted
kernel together yield independent solutions in this
adjoint subspace, whose dimension is $d-s$.
Consequently,
\begin{equation}\label{wker}
 N+\dim_{\mathbb C}
 \ker\bigl((\sigma\cdot\nabla-\sqrt2u):
                \mathcal D_\mu\longrightarrow\mathcal Y_\mu\bigr)
 \le d.
\end{equation}
Thus decaying first-order solutions, when present, further reduce
the number of ends permitted by the adjoint dimension bound.

We can now combine the end comparison with the adjoint dimension
bound to prove the estimate for arbitrary positive finite index.
 \begin{proof}[Proof of the end bound in Theorem~\ref{main}]
If $N\le1$, then \eqref{finends} follows from $I\ge1$.
Suppose that $N\ge2$.
Combining \eqref{wker} with \eqref{fincok}, we obtain
$$
N+\dim_{\mathbb C}
\ker\bigl((\sigma\cdot\nabla-\sqrt2u):
\mathcal D_\mu\longrightarrow\mathcal Y_\mu\bigr)
\le2I.
$$
Since the kernel dimension is nonnegative, this implies
$N\le2I$ and proves \eqref{finends}.
\end{proof}
 
We also have the following
\begin{theorem}\label{twoends}
If $\ind(u)=1$, then the zero set of $u$ has exactly two ends.
\end{theorem}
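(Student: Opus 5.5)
The upper bound comes from the end bound just proved, and the lower bound from excluding $N=0$ and $N=1$.

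\emph{Upper bound.} With $I=1$, \eqref{finends} gives $N\le 2$. Equivalently, \eqref{wker} together with \eqref{fincok} gives $N+\dim\ker\le 2$.

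\emph{The case $N=0$.} The zero set of $u$ is then compact: outside a large compact set, Theorem~\ref{Florit-Simon} gives that $u$ has no zeros. The complement of a large ball is connected in $\mathbb R^3$, so $u$ has a fixed sign there. The phase-decay estimates recalled in Proposition~\ref{geom} force $u\to\pm1$ at infinity, say $u\to1$. Then the energy is finite, since $1-u$ decays exponentially. A comparison/maximum-principle argument (for instance sliding the constant $1$, or using that $u>1-\epsilon$ outside a ball so $u$ is a supersolution-type object) shows $u\equiv1$. Alternatively, Pohozaev's identity for finite-energy solutions in $\mathbb R^3$ gives $\int\frac12|\nabla u|^2+3W(u)=0$, so $u$ is constant. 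A constant $\pm1$ has index $0$, which contradicts $I=1$.

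\emph{The case $N=1$.} This is the main obstacle. Here $u$ has a single end $f_1=c_1\log r+b_1+o(1)$, and there are two sub-cases.

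\begin{itemize}
\item If $c_1\ne0$, the end is catenoidal. The half-space theorem for the limiting interface should exclude this: blowing down, the zero set converges to a plane with multiplicity one. A single logarithmic end with nonzero $c_1$ would be an unbalanced configuration. The balancing/flux identity obtained from integrating $\partial_z u$ times the equation (the divergence of the stress tensor $T=\nabla u\otimes\nabla u-(\frac12|\nabla u|^2+W)I$ vanishes) over a large cylinder yields $2\pi\sigma_0\sum_i c_i\cdot(\pm)=0$ for the vertical flux. With $N=1$ this forces $c_1=0$.
\item If $c_1=0$, the end is planar: $u$ is asymptotic to $g(z-b_1)$ with decay $O(r^{-\alpha})$. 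I would then show $u$ is stable, contradicting $I=1$. One route is to note $u$ lies between the translates $g(z-b_1\pm M)$ outside a compact set and use sliding to prove monotonicity in $z$, hence stability. Another is to apply the half-space theorem in the Allen--Cahn setting as the paper indicates: a solution whose interface lies in a slab is one-dimensional, hence has index zero.
\end{itemize}

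I expect the delicate step to be making the sliding argument rigorous near the compact core. There I would compare $u$ with $u(\cdot+se_3)$ for large $s$, using the asymptotics in Proposition~\ref{corprop}, and decrease $s$ to $0$ via the strong maximum principle.

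\emph{Conclusion.} Combining the three steps gives $N=2$.
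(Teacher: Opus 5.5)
Your proof is correct, but it takes a different route from the paper's.

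\textbf{The paper's route.} The paper handles $N=0$ and $N=1$ in one step. In both cases the whole zero set lies in an open half-space. For $N=1$ this holds whatever the sign of $c_1$, because $c_1\log r+b_1+o(1)$ is bounded on one side. The half-space theorem of \cite{HLSWW21} then makes $u$ a one-dimensional heteroclinic, which is stable, contradicting $I=1$.

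\textbf{Your route.} You use several classical tools in place of that single theorem.
\begin{itemize}
\item For $N=0$, you use exponential decay to a well, finite energy, and the Pohozaev identity $\int\frac12|\nabla u|^2+3W(u)=0$.
\item For $N=1$, you first use the vertical flux identity $r\int_0^{2\pi}\int_{\mathbb R}u_ru_z\,dz\,d\theta=0$ to force $c_1=0$. This is the same balancing computation the paper performs later in Section~\ref{symm}.
\item After that, the zero set lies in a slab. Then $u\to\pm1$ uniformly as $z\to\pm\infty$, and the classical sliding proof of the Gibbons conjecture (or the slab case of the half-space theorem) gives one-dimensional symmetry.
\end{itemize}

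\textbf{Comparison.} Your approach avoids the deep half-space theorem. The cost is that you need the end asymptotics to evaluate the flux. Specifically, you need $rf_{1,r}\to c_1$, which comes from \eqref{logends}. You also need $\|\nabla_y(u-U_f)\|_{L^2(\mathbb R_z)}=o(r^{-1})$, which follows from Proposition~\ref{corprop} and interior elliptic estimates; you should state this explicitly. The paper's reduction is a single line, but it rests on a much stronger external result.

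\textbf{Smaller points.}
\begin{itemize}
\item No orientation sign belongs in the flux identity. One has $\int u_ru_z\,dz\approx-\sigma_0f_{i,r}$ for both increasing and decreasing layers (compare \eqref{moments}), so the identity reads $\sum_ic_i=0$. This is harmless when $N=1$.
\item For $N=0$, your alternative ``comparison with the constant $1$'' is not justified as stated. It is also not needed, since Pohozaev alone closes the case.
\item The ``delicate step near the compact core'' you anticipate for $c_1=0$ does not arise. Once the zero set is in a slab, the sign of $u$ is fixed above and below it, and $u\to\pm1$ uniformly as $z\to\pm\infty$. These uniform limits are exactly the hypothesis under which the sliding argument runs globally.
\end{itemize}
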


\begin{proof}
The bound \eqref{finends} implies $N\le2$.
If $N=0$, the zero set is compact or empty.
If $N=1$, its logarithmic height is bounded on one side of a
horizontal plane, and the compact remaining part preserves
that property. In either case, the full zero set lies in an
open half-space. The half-space theorem
\cite{HLSWW21} then forces the nonconstant solution to be a
one-dimensional heteroclinic. Such a solution is stable,
contrary to index one. Therefore $N=2$. The proof is thus completed.
\end{proof}

\section{Asymptotic behavior near the ends and axial symmetry}\label{symm}
In this section, we use moving plane method to prove the axial-symmetry of the Morse index one solution. The symmetry of two-end solutions has been studied in several works.
Gui, Liu, and Wei \cite{GLW} constructed and studied families of
axially symmetric two-end solutions. Liang and Yang
\cite[Theorem 1.4]{LY25} subsequently established axial symmetry
under a condition on the logarithmic growth rates, without assuming
symmetry in advance. More precisely, for ends with asymptotic heights
$f_\pm(y)=c_\pm\log|y|+b_\pm+o(1)$, their theorem applies when
$c_+-c_->2\sqrt2$. It also establishes reflection symmetry across
a horizontal plane and the corresponding monotonicity properties.

For the index-one solutions considered here, the preceding section
reduces the problem to two ends, while Proposition~\ref{geom}
ensures that $c_+-c_->\sqrt2$. The range
$\sqrt2<c_+-c_-\le2\sqrt2$ is not covered by \cite[Theorem 1.4]{LY25}, therefore requires a separate argument.
We develop estimates valid throughout the range
$c_+-c_->\sqrt2$, tracing the interaction between the two ends
in the Toda equation. These estimates determine the terms
$d_i\cdot y/r^2$ in the end expansions. The balancing formula then
shows that a single horizontal translation removes these terms
from both expansions, providing the common center used in the
moving-plane argument.

Theorem~\ref{twoends} reduces the index-one case to two ends.
After replacing $u$ by $-u$ if necessary, the outer phase is $1$.
Denote the upper and lower heights from Proposition~\ref{geom}
by $f_+$ and $f_-$. We use the approximation
\begin{equation}\label{twolay}
 U=1+g(z-f_+)-g(z-f_-),\qquad \psi=u-U,
 \qquad p=\sqrt2(c_+-c_-)>2.
\end{equation}
 
The functions $\gamma_+=g'(z-f_+)$ and
$\gamma_-=-g'(z-f_-)$ satisfy $\langle\psi,\gamma_\pm\rangle=0$.
Every vertical norm and inner product below is taken over
$\mathbb R_z$, and horizontal derivatives hold $z$ fixed.
Constants may depend on the fixed solution and on $p$.

We first improve the horizontal derivative estimate for the
approximation error. This gives a Toda equation with a sufficiently
small remainder to determine the term $d_i\cdot y/r^2$ in each end's expansion.

\begin{proposition}\label{todap}
For the two ends in \eqref{twolay},
\begin{equation}\label{twocorr}
 \|\psi\|_2+\|\psi\|_\infty\le Cr^{-2},\qquad
 \|\nabla_y\psi\|_2+\|\nabla_y\psi\|_\infty\le Cr^{-3},
\end{equation}
and
\begin{equation}\label{toda}
 \Delta f_\pm=\pm12\sqrt2\,e^{-\sqrt2(f_+-f_-)}+O(r^{-4}).
\end{equation}
\end{proposition}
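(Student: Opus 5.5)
The first bound in \eqref{twocorr} is Proposition~\ref{corprop} specialized to $N=2$. Only the horizontal derivative bound and the Toda equation \eqref{toda} are new. The plan is to differentiate the equation for $\psi$ horizontally, use the coercivity \eqref{ncoerc} again to get one more power of decay, and then project the equation for $\psi$ onto $\gamma_\pm$.

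\textbf{Step 1: the equation for $\psi$.} We write
\begin{equation*}
 L_U\psi:=-\Delta\psi+W''(U)\psi=-(\Delta U-W'(U))-N(\psi),\qquad N(\psi)=O(|\psi|^2).
\end{equation*}
The error has the expansion
\begin{equation*}
 \Delta U-W'(U)=-(\Delta f_+)\gamma_+-(\Delta f_-)\gamma_-+|\nabla f_\pm|^2\text{-terms}+\mathcal I,
\end{equation*}
where $\mathcal I$ is the interaction term. The $|\nabla f_\pm|^2$ terms are $O(r^{-2})$, and \eqref{logends} gives $o(r^{-2})$ control of their $r\partial_r,\partial_\theta$ derivatives. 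The interaction term is $\mathcal I=W'(g_+)-W'(g_-)-W'(U)$, with $g_\pm=g(z-f_\pm)$. Its size is $e^{-\sqrt2D}$ with $D=f_+-f_-$, which is $O(r^{-p})$ with $p>2$.

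\textbf{Step 2: horizontal derivatives.} I apply $\partial_{y_\alpha}$ at fixed $z$. Then $v=\partial_\alpha\psi$ solves
\begin{equation*}
 L_Uv=-\partial_\alpha(\Delta U-W'(U))-W'''(U)(\partial_\alpha U)\psi-\partial_\alpha N .
\end{equation*}
Since $\partial_\alpha f_\pm=O(r^{-1})$ and $\partial_\alpha\Delta f_\pm=O(r^{-3})$ (from the $C^2_b$ control and the Toda structure on annuli), the source has vertical $L^2$ norm at most $Cr^{-3}$. The term $W'''(U)(\partial_\alpha U)\psi$ is $O(r^{-1}\cdot r^{-2})$.

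The function $v$ is not orthogonal to $\gamma_\pm$. Differentiating the orthogonality gives
\begin{equation*}
 \langle v,\gamma_\pm\rangle=-\langle\psi,\partial_\alpha\gamma_\pm\rangle=O(r^{-1}\cdot r^{-2}).
\end{equation*}
So I subtract these projections, which are themselves $O(r^{-3})$, and apply \eqref{ncoerc} to the remainder. As in Proposition~\ref{corprop}, this yields the subsolution inequality
\begin{equation*}
 (-\Delta_y+c_0/2)\|v\|_2\le Cr^{-3}+\text{(absorbable terms)}.
\end{equation*}
The a priori input is $\|v\|_2=O(r^{-1})$, which comes from interior estimates applied to $\|\psi\|\le Cr^{-2}$ on unit balls; a barrier $Cr^{-3}+\delta r$ then gives $\|\nabla_y\psi\|_2\le Cr^{-3}$. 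Interior $H^2$ estimates for the equation of $v$ on unit balls, combined with Sobolev embedding, give the $L^\infty$ bound.

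\textbf{Step 3: the Toda equation.} I pair the $\psi$ equation with $\gamma_+$ over $\mathbb R_z$. The left side is
\begin{equation*}
 \langle L_U\psi,\gamma_+\rangle=-\Delta_y\langle\psi,\gamma_+\rangle+2\langle\nabla_y\psi,\nabla_y\gamma_+\rangle+\langle\psi,\Delta_y\gamma_+\rangle+\langle\psi,L_U\gamma_+\rangle.
 \end{equation*}
The first term vanishes by orthogonality. The second is $O(r^{-3}\cdot r^{-1})$ by Step 2. The third is $O(r^{-2}\cdot r^{-2})$. For the fourth, $L_U\gamma_+$ equals $(W''(U)-W''(g_+))\gamma_+$ up to the $\partial_z$ structure, and this has size $e^{-\sqrt2D}=O(r^{-p})$, so the product is $O(r^{-2-p})=O(r^{-4})$. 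The nonlinear term contributes $O(\|\psi\|_\infty\|\psi\|_2)=O(r^{-4})$.

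On the right, $\langle\Delta f_+\gamma_+,\gamma_+\rangle=\sigma_0\Delta f_+$. The $|\nabla f_+|^2g''$ term pairs to zero by parity. The cross terms $\Delta f_-\langle\gamma_-,\gamma_+\rangle$ are $O(r^{-2}De^{-\sqrt2D})=O(r^{-4})$ because $p>2$; this is where the logarithmic loss must be checked, and it is absorbed since $p>2$ strictly. The interaction pairing is the classical computation
\begin{equation*}
 \langle\mathcal I,\gamma_+\rangle=\sigma_0^{-1}\cdot\sigma_0\cdot12\sqrt2\,e^{-\sqrt2D}\bigl(1+O(e^{-\sqrt2D}D)\bigr),
\end{equation*}
up to normalization. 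It comes from the tail $g=1-2e^{-\sqrt2t}+\dots$, and with $W''(\pm1)=2$ it produces the constant $12\sqrt2$ after dividing by $\sigma_0=2\sqrt2/3$. The relative error is $O(r^{-p}\log r)=O(r^{-2})$, so the product is $O(r^{-4})$. The sign is fixed by the orientations, and the $\gamma_-$ pairing is symmetric.

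\textbf{Main obstacle.} I expect Step 2 to be the hardest. Specifically, one must show that $\partial_\alpha$ of the geometric error is $O(r^{-3})$ in $L^2_z$, which needs third derivatives of $f_\pm$. I would get these by bootstrapping: first use the already known Toda equation with $o(r^{-2})$ error, together with Schauder estimates on dyadic annuli rescaled to unit size, to obtain $D^3f_\pm=O(r^{-3})$, and only then run the barrier argument.
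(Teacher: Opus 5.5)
Your reduction of the first bound to Proposition~\ref{corprop} and your Step 3 projection onto $\gamma_\pm$ agree with the paper. Step 2, however, rests on a bound you do not have. You need $\partial_\alpha\Delta f_\pm=O(r^{-3})$ to make the differentiated source $O(r^{-3})$ in $L^2_z$. You also use it implicitly when you apply interior $H^2$ estimates to the full equation for $v=\partial_\alpha\psi$ to get the $L^\infty$ bound. Proposition~\ref{geom} controls the heights only in $C^2_b$, and the bootstrap you propose is circular. Rescaled Schauder estimates on dyadic annuli give $D^3f_\pm=O(r^{-3})$ only if $\Delta f_\pm$ is already controlled in $C^{1,\alpha}$ at scale $r$, that is, only if you essentially know $\nabla\Delta f_\pm=O(r^{-3})$. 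A Toda relation with $o(r^{-2})$ error is merely a sup-norm statement; since $e^{-\sqrt2(f_+-f_-)}=O(r^{-p})$, it is the same as $\Delta f_\pm=o(r^{-2})$, which the $C^2_b$ bound already contains. Differentiating the true error terms of the projected equation, e.g.\ $\langle\nabla_y\psi,\nabla_y\gamma_+\rangle$, brings in $\nabla_y^2\psi$, which is more than Step 2 is meant to produce.

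The missing observation is that third derivatives of the heights never need to be estimated. They enter only through $-(\partial_\alpha\Delta f_\pm)\gamma_\pm$, which lies in the span of the translations. The paper applies the vertical projection $P$ onto the complement of $\gamma_\pm$, so that $\|PL_u\psi_\alpha\|_2\le Cr^{-3}$ requires only $D^2f_\pm$. It then writes $\psi_\alpha=\eta_\alpha+\sum_i a_{\alpha,i}\gamma_i$ with $P\eta_\alpha=\eta_\alpha$. The relations $\langle\psi_\alpha,\gamma_i\rangle=f_{i,\alpha}\langle\psi,\partial_z\gamma_i\rangle$ and their first derivatives give $|a_{\alpha,i}|\le Cr^{-3}$ and $|\nabla a_{\alpha,i}|\le Cr^{-4}+Cr^{-1}\sum_\beta\|\eta_\beta\|_2$, again using only second derivatives of $f_\pm$. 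The term $-(\Delta a_{\alpha,i})\gamma_i$ is likewise removed by $P$. Coercivity \eqref{ncoerc} and the barrier are then applied to $\|\eta_\alpha\|_2$, not to $\|v\|_2$ as you wrote. Your plan already subtracts the projections and uses coercivity only on the orthogonal remainder, so pairing with that remainder kills these terms automatically; you only have to notice this instead of trying to bound them. For the sup bound, the paper also avoids the unprojected equation. It writes $L_u\eta_\alpha=F+\sum_i\lambda_i\gamma_i$ and determines the multipliers $\lambda_i$ from twice-differentiated orthogonality and a local energy estimate. Only then does it apply interior $H^2$ estimates and Sobolev embedding.
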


\begin{proof}
The first estimate in \eqref{twocorr} is Proposition~\ref{corprop}.
For the interaction term $K=U_{zz}-W'(U)$, substitution of the
explicit formula for $g$ gives
\begin{equation*}
 \begin{split}
 \langle K,\gamma_+\rangle
 &=48e^{-\sqrt2(f_+-f_-)}\bigl(1-e^{-\sqrt2(f_+-f_-)}\bigr)\\
 &\quad\times\int_0^\infty
 \frac{s^2\,ds}{(s+e^{-\sqrt2(f_+-f_-)})^2(s+1)^4}.
 \end{split}
\end{equation*}
The integral equals $1/3+O(r^{-p}\log r)$, as follows by
splitting at $s=e^{-\sqrt2(f_+-f_-)}$ and $s=1$.
Reflection in the midpoint between the two heights changes the
sign of the oriented translation. Hence
\begin{equation}\label{intproj}
 \langle K,\gamma_\pm\rangle
 =\pm16e^{-\sqrt2(f_+-f_-)}+O(r^{-2p}\log r).
\end{equation}
The same exponential tails give
\begin{equation}\label{intbnd}
 \begin{split}
 \|K\|_2+\sum_{i=\pm}\|\partial_{f_i}K\|_2
     &\le Cr^{-p}\sqrt{\log r},\\
 |\langle\gamma_+,\gamma_-\rangle|
  +|\langle\partial_z\gamma_+,\gamma_-\rangle|
     &\le Cr^{-p}\log r,\\
 \|[W''(U)-W''(g(z-f_i))]\gamma_i\|_2
     &\le Cr^{-p}\sqrt{\log r}\quad(i=\pm).
 \end{split}
\end{equation}
Here the height derivatives hold $z$ and the other height fixed.
The square-root factor comes from integration over the interval
between the ends, whose length is $O(\log r)$.

Let $P$ be the vertical orthogonal projection onto the complement
of $\gamma_+$ and $\gamma_-$. The exact equation for $\psi$ is
\begin{equation}\label{psieq}
 \begin{split}
 (-\Delta+W''(U))\psi
 &=-\sum_{i=\pm}(\Delta f_i)\gamma_i
   +\sum_{i=\pm}|\nabla f_i|^2\partial_z\gamma_i+K
   -3U\psi^2-\psi^3.
 \end{split}
\end{equation}
Differentiate at fixed $z$, with $\alpha=1,2$. Projection removes the terms
$-(\partial_\alpha\Delta f_i)\gamma_i$, which are the only
third derivatives of the heights. Since
$|\nabla f_i|=O(r^{-1})$, $|D^2f_i|=O(r^{-2})$, and
$\|\psi\|_2+\|\psi\|_\infty=O(r^{-2})$, the remaining terms
in the differentiated equation satisfy
\begin{equation*}
 \|P L_u\psi_\alpha\|_2
 \le C\bigl(r^{-3}+r^{-p-1}\sqrt{\log r}\bigr)\le Cr^{-3}.
\end{equation*}
The differentiated error has a small translation component.
Indeed, differentiating orthogonality gives
$\langle\psi_\alpha,\gamma_i\rangle
=f_{i,\alpha}\langle\psi,\partial_z\gamma_i\rangle$. We 
decompose
\begin{equation*}
 \psi_\alpha=\eta_\alpha+\sum_{i=\pm}a_{\alpha,i}\gamma_i,
 \qquad P\eta_\alpha=\eta_\alpha.
\end{equation*}
The Gram matrix of the translations tends to $\sigma_0$ times
the identity. The orthogonality equations and their first
derivatives therefore imply
\begin{equation*}
 \sum_i|a_{\alpha,i}|\le Cr^{-3},\qquad
 \sum_i|\nabla a_{\alpha,i}|
 \le Cr^{-4}+Cr^{-1}\sum_{\beta=1}^2\|\eta_\beta\|_2.
\end{equation*}
In $P L_u(a_{\alpha,i}\gamma_i)$ the term
$-(\Delta a_{\alpha,i})\gamma_i$ also vanishes. The remaining
terms, using \eqref{intbnd}, yield
\begin{equation*}
 \|P L_u\eta_\alpha\|_2
 \le Cr^{-3}+Cr^{-2}\sum_{\beta=1}^2\|\eta_\beta\|_2.
\end{equation*}
The coercivity estimate \eqref{ncoerc} persists with $U$ replaced
by $u$. Apply the norm comparison from the proof of
Proposition~\ref{corprop} to the pair $(\eta_1,\eta_2)$ and
absorb the $Cr^{-2}$ term. Comparison with $Cr^{-3}$ proves
$\|\eta_\alpha\|_2\le Cr^{-3}$. The exterior comparison is
justified by the phase tails and the preliminary bound
$\|\eta_\alpha\|_2=O(\sqrt{\log r})$, with a positive multiple
of $r$ added to the comparison function and then sent to zero.
The corresponding local pointwise estimate follows from scalar
$H^2$ estimates. More explicitly, for
$P L_u w=F$, $Pw=w$, $PF=F$, recover
$L_u w=F+\sum_i\lambda_i\gamma_i$.
Twice differentiated orthogonality determines the coefficients:
\begin{equation*}
 \sum_j\langle\gamma_j,\gamma_i\rangle\lambda_j
 =2\langle\nabla_y w,\nabla_y\gamma_i\rangle
   +\langle w,\Delta_y\gamma_i\rangle
   +\langle w,(-\partial_z^2+W''(u))\gamma_i\rangle.
\end{equation*}
A local energy estimate controls $\nabla w$, and the uniformly
invertible Gram matrix controls $\lambda_i$. Interior $H^2$
estimates and Sobolev embedding then bound $\|w\|_\infty$ by
the local $L^2$ norms of $w$ and $F$.
Applied to $\eta_\alpha$, this completes \eqref{twocorr}.

Finally, let us project \eqref{psieq} onto each translation. Twice
differentiated orthogonality gives
\begin{equation*}
 \begin{split}
 \langle(-\Delta+W''(U))\psi,\gamma_i\rangle
 &=2\langle\nabla_y\psi,\nabla_y\gamma_i\rangle
   +\langle\psi,\Delta_y\gamma_i\rangle\\
 &\quad+\langle\psi,[W''(U)-W''(g(z-f_i))]\gamma_i\rangle.
 \end{split}
\end{equation*}
The first two terms and the nonlinear projection are $O(r^{-4})$.
The diagonal slope term vanishes because
$\langle\partial_z\gamma_i,\gamma_i\rangle=0$.
Using \eqref{intproj}--\eqref{intbnd} for the remaining terms,
we obtain
\begin{equation*}
 \sigma_0\Delta f_\pm
 =\pm16e^{-\sqrt2(f_+-f_-)}
   +O\bigl(r^{-4}+r^{-p-2}\log r+r^{-2p}\log r\bigr).
\end{equation*}
For fixed $p>2$, the remainder is $O(r^{-4})$.
Since $16/\sigma_0=12\sqrt2$, this proves \eqref{toda}.
\end{proof}

The leading interaction may decay too slowly to be included in
an $o(r^{-1})$ height error. We therefore subtract an exact
radial solution of the Toda system before determining the terms
$d_i\cdot y/r^2$. Now we define$$
 F_\pm(r)=c_\pm\log r+b_\pm
       \pm\frac1{\sqrt2}\log(1+\Lambda r^{-(p-2)}),
 \qquad
 \Lambda=\frac{24e^{-\sqrt2(b_+-b_-)}}{(p-2)^2}.$$
Note that
$\Delta\log(1+\Lambda r^{-(p-2)})
=(p-2)^2\Lambda r^{-p}(1+\Lambda r^{-(p-2)})^{-2}$.
Consequently,
$\Delta F_\pm=\pm12\sqrt2e^{-\sqrt2(F_+-F_-)}$. 
 
\begin{lemma}\label{poisson}
Suppose $w\in C^2(\mathbb R^2\setminus\overline{B_{R_0}})$,
\begin{equation*}
 \Delta w=O(|y|^{-q}),\qquad
 \lim_{|y|\to\infty}w(y)=0,
 \qquad q>2.
\end{equation*}
Then $w=O(r^{-\beta})$ for every
$0<\beta<\min\{1,q-2\}$.
If $q>3$, there is $d\in\mathbb R^2$ such that
\begin{equation}\label{poisang}
 w=d\cdot\frac{y}{r^2}+O_{C_b^1}(r^{-1-\varepsilon})
 \quad\text{for every }0<\varepsilon<\min\{1,q-3\}.
\end{equation}
\end{lemma}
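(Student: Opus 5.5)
The plan is to write $w$ as a Newtonian-potential particular solution plus an exterior harmonic function, and then to read off the decay from the Laurent-type expansion of the harmonic part.

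\emph{Particular solution.} Set $h=\Delta w$ on $\{r>R_0\}$. Extend $h$ by zero to a function $\tilde h$ on $\mathbb R^2$; this is bounded, with $|\tilde h|\le C\langle r\rangle^{-q}$ and $q>2$, so $\tilde h\in L^1$. Define
\[
v=\frac1{2\pi}\int\log|y-y'|\,\tilde h(y')\,dy'
-\frac{m}{2\pi}\log\langle r\rangle,
\qquad m=\int\tilde h .
\]
The standard splitting into the regions $|y'|<r/2$, $|y-y'|<r/2$ and the rest gives the pointwise bound $v=O(r^{2-q}\log r+r^{-1})$. In particular $v=O(r^{-\beta})$ for every $\beta<\min\{1,q-2\}$. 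Moreover $\Delta v=h+O(r^{-4})$ on the exterior region.

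\emph{Harmonic part.} Let $\tilde h_1=h-\Delta v=O(r^{-4})$ and absorb it by iterating once; alternatively, define $v$ directly with the kernel and subtract $\frac{m}{2\pi}\log r$ only for $r$ large. Either way, $H=w-v$ is harmonic for $r>R_1$, up to a correction that is $O(r^{-\beta})$.

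Expand $H$ in a Laurent series:
\[
H=A+B\log r+\sum_{k\ne0}c_kr^{k}e^{ik\theta}.
\]
Since $w\to0$ and $v=o(1)$, we have $H\to0$. The circle averages therefore force $B=0$ and $A=0$. Cauchy estimates on the circle coefficients exclude $k>0$. Hence $H=O(r^{-1})$, which proves the first claim.

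\emph{The case $q>3$.} Now $\tilde h$ has a finite first moment. Expanding
\[
\log|y-y'|=\log r-\frac{y\cdot y'}{r^2}+O\Bigl(\frac{|y'|^2}{r^2}\Bigr)
\]
on $|y'|<r/2$, the monopole term is cancelled by the subtracted logarithm. The dipole term is
\[
-\frac1{2\pi}\frac{y}{r^2}\cdot\int y'\tilde h(y')\,dy' .
\]
What remains is $O(r^{-1-\varepsilon})$ for $\varepsilon<\min\{1,q-3\}$, using the tail $\int_{|y'|>r/2}|y'||\tilde h|\le Cr^{3-q}$ and the near-diagonal region bound $r^{2-q}\log r$.

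In the harmonic Laurent series, the $k=\pm1$ terms give a dipole $d'\cdot y/r^2$, and the modes $|k|\ge2$ are $O(r^{-2})$. Adding the two dipoles defines $d$.

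\emph{$C^1_b$ bounds.} Apply interior estimates to $w(r_0\,\cdot)$ on annuli $\{r_0/2<|y|<2r_0\}$ after rescaling. The rescaled Laplacian is $r_0^2\,O(r_0^{-q})$, and the rescaled $C^0$ error is $r_0^{-1-\varepsilon}$. The $W^{2,p}$ estimates then give a $C^1$ bound on the scaled derivative, i.e.\ on $r\partial_r$ and $\partial_\theta$.

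The main obstacle I expect is the careful potential-theoretic bookkeeping of the remainder in the dipole expansion. The near-diagonal contribution $\int_{|y-y'|<r/2}\log|y-y'|\,\tilde h$ must be checked to be $O(r^{2-q}\log r)$, which is $o(r^{-1-\varepsilon})$ when $q>3$. I would handle it with the pointwise bound on $\tilde h$ over a disk of radius $r/2$.
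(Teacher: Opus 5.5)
Your argument is correct, but it takes a different route from the paper.

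\textbf{The paper's route.} The paper applies the Kelvin transform $v(x)=w(x/|x|^2)$, so that $\Delta v=O(|x|^{q-4})\in L^s_{\mathrm{loc}}$ for some $s>1$, and for some $s>2$ when $q>3$. Subtracting a local Newtonian potential leaves a bounded harmonic function on a punctured disk, whose singularity is removable. Then $W^{2,s}$ regularity and Sobolev embedding give $v\in C^{0,\beta}$ or $C^{1,\varepsilon}$ with $v(0)=0$. Inverting $v(x)=d\cdot x+O(|x|^{1+\varepsilon})$ and $\nabla v=d+O(|x|^{\varepsilon})$ yields the expansion and the $C^1_b$ bound simultaneously.

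\textbf{What each approach buys.} In the paper's approach, your monopole subtraction, the vanishing of $A$ and $B$, and the exclusion of growing modes are all absorbed into the single removable-singularity step. The thresholds $q-2$ and $q-3$ appear as H\"older exponents from Sobolev embedding, and no separate rescaled interior estimate is needed for the derivatives. Your direct exterior approach (logarithmic potential, three-region splitting, Fourier expansion of the harmonic remainder) is more elementary and more explicit. It identifies $d$ as the dipole coefficient of the harmonic part minus $\frac1{2\pi}\int y'\tilde h(y')\,dy'$. It also shows directly that $q>2$ is what makes the monopole finite and $q>3$ what makes the first moment finite. The price is more bookkeeping.

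\textbf{Points to tidy.}
\begin{itemize}
\item Cut $h$ off at some $R_1>R_0$ so that $\tilde h$ is genuinely bounded.
\item The potential of a merely continuous $\tilde h$ is only $W^{2,p}_{\mathrm{loc}}$, so $H$ is harmonic by Weyl's lemma rather than classically.
\item Each Fourier mode of $H$ has the form $\alpha_k r^{|k|}+\beta_k r^{-|k|}$; your series lists only one of the two. Your circle-coefficient argument still handles both.
\item The rescaled interior estimate should be applied to $w-d\cdot y/r^2$, which has the same Laplacian as $w$. You also need $r_0^{2-q}\le r_0^{-1-\varepsilon}$, which holds because $\varepsilon<q-3$.
\end{itemize}
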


\begin{proof}
Let $v(x):=w(x/|x|^2)$ be the Kelvin transform of $w$. Then $v(x)\to0$ as $x\to0$ and
$\Delta v=O(|x|^{q-4})$, which belongs locally to $L^s$ for
every finite $s$ satisfying $s(4-q)<2$. Since $q>2$, then we can choose $s>1$. By \cite{Gi-Tr}, the local Newtonian potential of $\Delta w$ belongs to $W^{2, s}$.
Subtracting it from $w$ leaves a bounded
harmonic function, so the singularity at the origin is removable.
Interior $W^{2,s}$ regularity and Sobolev embedding imply
$v(x)=O(|x|^\beta)$ for the stated exponents.
When $q>3$, they give $v\in C^{1,\varepsilon}$ for every
$0<\varepsilon<\min\{1,q-3\}$.
Since $v(0)=0$, 
$v(x)=d\cdot x+O(|x|^{1+\varepsilon})$ and
$\nabla v(x)=d+O(|x|^\varepsilon)$.
Inverting the transformation proves both conclusions,
including the scaled derivative estimate in \eqref{poisang}.
\end{proof}

Now we apply the lemma to $v_\pm=f_\pm-F_\pm=o(1)$.
Their sum satisfies $\Delta(v_++v_-)=O(r^{-4})$.
For the difference $v=v_+-v_-$, the mean value formula gives
\begin{equation}\label{poisitr}
 \Delta v+
 48e^{-\sqrt2(F_+-F_-)}
       \left(\int_0^1e^{-\sqrt2tv}\,dt\right)v=O(r^{-4}).
\end{equation}
The coefficient of $v$ is $O(r^{-p})$.
Starting with $v=o(1)$, the first assertion of
Lemma~\ref{poisson} improves the decay exponent by any strictly
smaller amount than $p-2$, until the forcing decays faster than
$r^{-3}$. This takes finitely many steps because $p>2$.
The second assertion then yields $v=O(r^{-1})$.
Substitution in \eqref{poisitr} improves the forcing to
$O(r^{-4}+r^{-p-1})$. Applying the second assertion again,
and treating the sum in the same way, gives
\begin{equation}\label{endang}
 f_i=F_i+d_i\cdot\frac{y}{r^2}+O_{C_b^1}(r^{-1-\varepsilon}),
 \qquad i=\pm,\quad 0<\varepsilon<\min\{1,p-2\}.
\end{equation}

It remains to show that one horizontal translation removes the terms
$d_+\cdot y/r^2$ and $d_-\cdot y/r^2$ simultaneously. This follows from a direct application of the balancing formula (or conservation law), see \cite{Moduli}. We use
vertical translation and with rotations mixing a horizontal
direction and the vertical direction. Let
$e=|\nabla u|^2/2+W(u)$ and $T_{ij}=u_i u_j-e\delta_{ij}$.
The equation implies $\partial_iT_{ij}=0$, and symmetry of $T$
gives $\diver(TX)=0$ for every $X$ obtained by differentiating Euclidean rigid motions at the identity.
The end estimates imply, uniformly on $|y|=r$,
\begin{equation}\label{moments}
 \begin{split}
 \int_{\mathbb R}u_ru_z\,dz
    &=-\sigma_0(f_{+,r}+f_{-,r})+O(r^{-3}\log r),\\
 \int_{\mathbb R}ze\,dz
    &=\sigma_0(f_++f_-)+O(r^{-2}\log r),\\
 \int_{\mathbb R}zu_ru_{y_j}\,dz
    &=O(r^{-2}\log r).
 \end{split}
\end{equation} 

We now integrate the conservation law over
$\{|y|<r,\ |z|<Z\}$ and let $Z\to\infty$ with $r$ fixed.
The top and bottom fluxes vanish by the phase tails, also when
they contain a linear factor $z$.
For vertical translation, the lateral flux is
\begin{equation*}
 0=r\int_0^{2\pi}\int_{\mathbb R}u_ru_z\,dz\,d\theta.
\end{equation*}
Equations \eqref{endang} and \eqref{moments} therefore imply
\begin{equation}\label{cbal}
 c_++c_-=0,\qquad c_+=k,\quad c_-=-k,\quad k>0.
\end{equation}
For $X_j=y_j\partial_z-z\partial_{y_j}$, set $n=y/r$. We have
\begin{equation*}
 \begin{split}
 0&=r\int_0^{2\pi}\int_{\mathbb R}
       \bigl(y_ju_ru_z-zu_ru_{y_j}+ze\,n_j\bigr)\,dz\,d\theta\\
  &=\sigma_0r\sum_{i=\pm}\int_0^{2\pi}
                  n_j(f_i-rf_{i,r})\,d\theta+o(1).
 \end{split}
\end{equation*} The leading term in
$f_i-rf_{i,r}$ that depends on $\theta$ is $2d_i\cdot n/r$.
Since $\int_0^{2\pi}n_jn_l\,d\theta=\pi\delta_{jl}$,
we obtain
\begin{equation}\label{dbal}
 d_++d_-=0.
\end{equation}
Under $y=a+\widetilde y$, the coefficient $d_i$ changes to
$d_i+c_i a$. The translated radial correction has error
$O_{C_b^1}(r^{-1-(p-2)})$.
Thus $a=-d_+/k$ makes both $d_++c_+a$ and $d_-+c_-a$ equal to zero by
\eqref{cbal}--\eqref{dbal}.

\begin{proof}[Completion of the proof of Theorem~\ref{main}] 
In these centered coordinates, we first define
$$
 U_{\mathrm{rad}}(r,z)
 =1+g(z-F_+(r))-g(z-F_-(r)).
$$
Proposition~\ref{todap} and \eqref{endang} then tell us
$$
 \sup_z|u-U_{\mathrm{rad}}|\le Cr^{-1-\varepsilon},\qquad
 \sup_z|\nabla_y(u-U_{\mathrm{rad}})|\le Cr^{-2-\varepsilon}.
$$
With all the decay estimates at hand, we can then use standard moving plane method to show that the solution is axially symmetric (See for instance \cite{GLW} and \cite{LY25}).  We omit the details.
\end{proof}

\section*{\bf Acknowledgements} Yong Liu was supported by  National Natural Science Foundation of China No. 12471204. Juncheng Wei was supported by National R\&D Program of China (Grant No. 2022YFA1005602), and Hong Kong General Research Fund (No. 14303125) ``On Fujita equation in the critical or supercritical regime''. Ke Wu is supported by the National Natural Science Foundation of China (No. 12401264) and Yunnan Revitalization Talent Support Program. The authors used OpenAI models as assistive tools in preparing
this manuscript. Mathematical arguments, proofs, and computations in the manuscript were verified and finalized by the authors.

\end{document}